\documentclass[leqno]{amsart}
\usepackage{caption}
\usepackage{amsmath,amssymb,amsthm,mathtools,mathrsfs,tikz}
\usepackage{environ}
\usepackage{stmaryrd}
\usetikzlibrary{arrows}
\usetikzlibrary{positioning}
\usetikzlibrary{decorations.text}
\usetikzlibrary{decorations.pathmorphing}
\usetikzlibrary{decorations.pathreplacing}
\makeatletter
\newsavebox{\@brx}
\newcommand{\llangle}[1][]{\savebox{\@brx}{\(\m@th{#1\langle}\)}%
	\mathopen{\copy\@brx\kern-0.5\wd\@brx\usebox{\@brx}}}
\newcommand{\rrangle}[1][]{\savebox{\@brx}{\(\m@th{#1\rangle}\)}%
	\mathclose{\copy\@brx\kern-0.5\wd\@brx\usebox{\@brx}}}
\makeatother
\makeatletter
\newsavebox{\measure@tikzpicture}
\NewEnviron{scaletikzpicturetowidth}[1]{%
	\mathrm{Def}\tikz@width{#1}%
	\mathrm{Def}\tikzscale{1}\begin{lrbox}{\measure@tikzpicture}%
		\BODY
	\end{lrbox}%
	\pgfmathparse{#1/\wd\measure@tikzpicture}%
	\edef\tikzscale{\pgfmathresult}%
	\BODY
}
\makeatother
\usepackage{bbm}
\DeclarePairedDelimiter\norm{\lvert}{\rvert}
\DeclarePairedDelimiter\inner{\langle}{\rangle}


\makeatletter
\let\save@mathaccent\mathaccent
\newcommand*\if@single[3]{%
	\setbox0\hbox{${\mathaccent"0362{#1}}^H$}%
	\setbox2\hbox{${\mathaccent"0362{\kern0pt#1}}^H$}%
	\ifdim\ht0=\ht2 #3\else #2\fi
}
\newcommand*\rel@kern[1]{\kern#1\dimexpr\macc@kerna}
\newcommand*\widebar[1]{\@ifnextchar^{{\wide@bar{#1}{0}}}{\wide@bar{#1}{1}}}
\newcommand*\wide@bar[2]{\if@single{#1}{\wide@bar@{#1}{#2}{1}}{\wide@bar@{#1}{#2}{2}}}
\newcommand*\wide@bar@[3]{%
	\begingroup
	\def\mathaccent##1##2{%
		\let\mathaccent\save@mathaccent
		\if#32 \let\macc@nucleus\first@char \fi
		\setbox\z@\hbox{$\macc@style{\macc@nucleus}_{}$}%
		\setbox\tw@\hbox{$\macc@style{\macc@nucleus}{}_{}$}%
		\dimen@\wd\tw@
		\advance\dimen@-\wd\z@
		\divide\dimen@ 3
		\@tempdima\wd\tw@
		\advance\@tempdima-\scriptspace
		\divide\@tempdima 10
		\advance\dimen@-\@tempdima
		\ifdim\dimen@>\z@ \dimen@0pt\fi
		\rel@kern{0.6}\kern-\dimen@
		\if#31
		\overline{\rel@kern{-0.6}\kern\dimen@\macc@nucleus\rel@kern{0.4}\kern\dimen@}%
		\advance\dimen@0.4\dimexpr\macc@kerna
		\let\final@kern#2%
		\ifdim\dimen@<\z@ \let\final@kern1\fi
		\if\final@kern1 \kern-\dimen@\fi
		\else
		\overline{\rel@kern{-0.6}\kern\dimen@#1}%
		\fi
	}%
	\macc@depth\@ne
	\let\math@bgroup\@empty \let\math@egroup\macc@set@skewchar
	\mathsurround\z@ \frozen@everymath{\mathgroup\macc@group\relax}%
	\macc@set@skewchar\relax
	\let\mathaccentV\macc@nested@a
	\if#31
	\macc@nested@a\relax111{#1}%
	\else
	\def\gobble@till@marker##1\endmarker{}%
	\futurelet\first@char\gobble@till@marker#1\endmarker
	\ifcat\noexpand\first@char A\else
	\def\first@char{}%
	\fi
	\macc@nested@a\relax111{\first@char}%
	\fi
	\endgroup
}
\makeatother


\usepackage{bm}
\usepackage[colorlinks=true,linkcolor=blue,citecolor=blue]{hyperref}
\usepackage{enumitem}
\usepackage{csquotes}

\def\irr#1{{\rm  Irr}(#1)}
\def\cd#1{{\rm  cd}(#1)}
\def\cl{{\rm cl}}

\newcounter{intro}

\newtheorem{introthm}[intro]{Theorem}

\newtheorem{thm}{Theorem}[section]
\newtheorem{lem}[thm]{Lemma}
\newtheorem{cor}[thm]{Corollary}

\theoremstyle{remark}
\newtheorem{rem}[thm]{Remark}
\theoremstyle{definition}

\newtheorem{exam}[thm]{Example}

\title{Partial GVZ-groups}

\author{Shawn T. Burkett}
\address{Department of Mathematical Sciences, Kent State University, Kent,
	Ohio 44242, U.S.A.} \email{sburkett@math.kent.edu}

\author{Mark L. Lewis}
\address{Department of Mathematical Sciences, Kent State University, Kent,
	Ohio 44242, U.S.A.} \email{lewis@math.kent.edu}

\date{\today}
\keywords{GVZ groups; $p$-groups; fully ramified characters; flat groups}
\subjclass[2010]{20C15}

\begin{document}

\begin{abstract}
Following the literature, a group $G$ is called a group of central type if $G$ has an irreducible character that vanishes on $G\setminus Z(G)$. Motivated by this definition, we say that a character $\chi\in\irr{G}$ has central type if $\chi$ vanishes on $G\setminus Z(\chi)$, where $Z(\chi)$ is the center of $\chi$. Groups where every irreducible character has central type have been studied previously under the name GVZ-groups (and several other names) in the literature. In this paper, we study the groups $G$ that possess a nontrivial, normal subgroup $N$ such that every character of $G$ either contains $N$ in its kernel or has central type. The structure of these groups is surprisingly limited and has many aspects in common with both central type groups and GVZ-groups. 
\end{abstract}

\maketitle

\section{Introduction}

Throughout this paper, all groups are finite.  For a group $G$, we write $\irr G$ for the set of irreducible characters of $G$.  Let $N$ be a normal subgroup of $G$. Take $\chi$ to be a character in $\irr G$ and assume that $\chi_N$ is homogeneous; i.e. that $\chi_N$ is a multiple of an irreducible character of $N$. Following the literature, when $\chi$ satisfies the condition that $\chi(g) = 0$ for every element $g\in G\setminus N$, we say $\chi$ is {\it fully ramified} over $N$.  It is well-known that $\chi$ being fully ramified over $N$ is equivalent to $\chi_N$ having a unique irreducible constituent $\theta$ and $\theta^G$ having a unique irreducible constituent.  This is also equivalent to $\chi_N$ having a unique irreducible constituent $\theta$ with the property that $|G:N| = (\chi (1)/\theta (1))^2$.  (See Lemma 2.29 and Problem 6.3 of \cite{MI76}.)

Following the literature, a group $G$ is called {\it central type} if there is an irreducible character of $G$ that is fully ramified over the center $Z(G)$.  Central type groups have been studied extensively in the literature.  We mention a few of the important papers: \cite{DJ}, \cite{espuelas}, \cite{gagola}, and \cite{IH}.  In particular, it is known that if $G$ is of central type, then $G$ is solvable (see \cite{IH}), and $G$ is central type if and only if all of the Sylow subgroups of $G$ are central type (see \cite{DJ}). 
 
With this as motivation, we say that an irreducible character $\chi$ of $G$ has {\it central type} if $\chi$, considered as a character of $G/\ker(\chi)$, is fully ramified over $Z(G/\ker(\chi))$.  (I.e., $G/\ker{\chi}$ is a group of central type with faithful character $\chi$.)  Groups where every member of $\irr{G}$ has central type have been called {\it GVZ-groups} in the literature (see \cite{ML19gvz, AN12gvz, AN16gvz}), and we use this terminology in this paper. Such groups are necessarily nilpotent (see \cite{AN12gvz}); thus, it is not difficult to see that $G$ is a GVZ-group if and only if $G$ is the direct product of GVZ groups of prime power orders for different primes.  Moreover, we show in \cite{ourpre} that $G$ is a GVZ-group if and only if $\cl_G(g)=g[g,G]$ for each element $g\in G$.

We have seen that having a central type character whose center equals the center of the group implies that the group is solvable and that having all irreducible characters be central type implies that the group is nilpotent.  It makes sense to ask what can be said if some but not necessarily all of the irreducible characters are known to have central type. 

When $N$ is a normal subgroup of a group $G$, we write $\irr {G \mid N}$ for the set of irreducible characters of $G$ whose kernels do not contain $N$.  This set was first studied explicitly by Isaacs and Knutson in \cite{Knutson} where they consider the impact of a number of conditions on $\irr {G \mid N}$ on the structure of $N$.  We now consider the assumption that all the characters in $\irr {G \mid N}$ have central type.  In particular, we obtain results regarding the structure of $G$ as opposed to just the structure of $N$.  

\begin{introthm} \label{nilp}
Let $N$ be a nontrivial, normal subgroup of a group $G$.  Assume that every character in $\irr {G \mid N}$ has central type. Then $G$ is nilpotent, and $\cl_G(g)=g[g,G]$ for every element $g\in N$. 
\end{introthm}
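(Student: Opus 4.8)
The plan is to first establish nilpotence and then deduce the conjugacy-class statement. For nilpotence, I would argue that it suffices to show every Sylow subgroup of $G$ is normal. Since every $\chi\in\irr{G\mid N}$ has central type, each such $\chi$ is fully ramified over $Z(\chi) = Z(G/\ker\chi)/\ker\chi$, so in particular $\chi$ vanishes off a normal subgroup and $G/\ker\chi$ is of central type, hence solvable by \cite{IH}. If some $\chi\in\irr{G\mid N}$ is faithful, then $G$ itself is of central type and thus solvable; in general I would aim to show $G$ is solvable by a short argument — e.g. take a chief factor of $G$ below $N$, find a character in $\irr{G\mid N}$ detecting it, and note that the corresponding quotient is of central type and therefore solvable, then induct. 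Once $G$ is solvable, I want to upgrade to nilpotence. Here the key observation is that for a central type character, $\chi$ vanishes on $G\setminus Z(\chi)$, so if $\chi$ is faithful then $G$ has an abelian (in fact central, modulo nothing) quotient $G/Z(\chi)$ of square order and, more usefully, $\chi$ restricted to any normal subgroup is a multiple of an irreducible — I would exploit this to control the structure of $G$ on the characters that "see" $N$.

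The cleaner route to nilpotence is probably via the conjugacy-class description itself, so I would try to prove both halves together. The target $\cl_G(g) = g[g,G]$ for $g\in N$ is equivalent (by a standard computation, cf. the characterization of GVZ-groups in \cite{ourpre}) to the statement that for every $\chi\in\irr G$ and every $g\in N$, either $\chi(g) = 0$ or $|\chi(g)| = \chi(1)$; i.e. $g$ lies in $Z(\chi)$ whenever $\chi(g)\neq 0$. But this is exactly what the central type hypothesis gives: if $N\not\subseteq\ker\chi$ then $\chi\in\irr{G\mid N}$ has central type, so $\chi$ vanishes off $Z(\chi)$, hence for $g\in N$ either $\chi(g) = 0$ or $g\in Z(\chi)$, giving $|\chi(g)| = \chi(1)$; and if $N\subseteq\ker\chi$ then in particular $g\in\ker\chi\subseteq Z(\chi)$ for $g\in N$, so again $|\chi(g)| = \chi(1)$. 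Summing $|\chi(g)|^2$ over $\irr G$ then forces $|\cl_G(g)| = |G:C_G(g)|$ to match $|[g,G]|$, and a comparison with the column orthogonality relation at $g$ yields $\cl_G(g) = g[g,G]$.

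With the class equation in hand for elements of $N$, nilpotence should follow: taking $g$ to range over a generating set of $N$ shows $[N,G,G,\dots] $ stabilizes appropriately and $N$ is "central-by-abelian" inside $G$ in a strong sense; combined with the solvability already obtained and the fact that $N$ is nontrivial and normal, I would push this down to show $G$ has a normal Sylow $p$-subgroup for each $p$. The main obstacle I anticipate is precisely this last step — passing from good control of conjugacy classes of elements \emph{of $N$} to global nilpotence of $G$, since $N$ could be small (e.g. of prime order). I expect the resolution is to induct on $|G|$: factor out a minimal normal subgroup $M\le N$ (which is a $p$-group since the class condition forces $M\subseteq Z_\infty$-type behavior), verify the hypotheses pass to $G/M$ with $N/M$ — using that $\irr{G/M \mid N/M}\subseteq\irr{G\mid N}$ — conclude $G/M$ is nilpotent by induction, and then rule out a non-nilpotent extension using that $M$ is central in $G$ (forced by $\cl_G(g) = g[g,G]$ applied to $g\in M$, which gives $[M,G]=1$ when $M$ is a chief factor with trivial action, the action being trivial because a nontrivial coprime-free action would produce a class of size $>1$ incompatible with $M\le\ker\chi$ or $M\le Z(\chi)$ for all $\chi$). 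A base-of-induction check and the verification that a central extension of a nilpotent group by the conjugacy-class condition remains nilpotent complete the argument.
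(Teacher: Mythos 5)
Your argument for the second statement is correct and is essentially the paper's: for $g\in N$ and any $\chi\in\irr G$, either $g\in Z(\chi)$ (when $[g,G]\le\ker\chi$, in particular when $N\le\ker\chi$) or $\chi(g)=0$ (by central type), so column orthogonality gives $|C_G(g)|=|G:[g,G]|$, hence $|\cl_G(g)|=|[g,G]|$ and $\cl_G(g)=g[g,G]$. The solvability reduction is also fine, since the kernels of the characters in $\irr{G\mid N}$ intersect trivially and each quotient $G/\ker\chi$ is of central type.

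The nilpotence half, however, has a genuine gap, and it sits exactly where you flagged it. Your induction quotients by a minimal normal subgroup $M\le N$; flatness does force $M\le Z(G)$, and when $M<N$ the hypothesis passes to $(G/M,\,N/M)$ and the central extension step closes the argument. But when $N$ itself is minimal normal you get $M=N$, the quotient carries no hypothesis at all (you know nothing about $\irr{G/N}$), and the induction cannot start. This case cannot be rescued by the conjugacy-class condition: the paper shows that flatness of every element of $N$ only yields $N\le Z_\infty(G)$ (Theorem E), and gives nonnilpotent examples where every element of a noncentral normal subgroup is flat. So ``central subgroup $N$ with every character in $\irr{G\mid N}$ of central type'' --- the Camina-pair/Gagola-type situation --- is the irreducible hard case, and your outline contains no tool for it. The paper's missing ingredient is a Sylow analysis in the style of DeMeyer--Janusz: for each $\chi\in\irr{G\mid N}$ one applies their theorem to the central type group $G/\ker\chi$ to get $Z(\widebar S)=\widebar S\cap Z(\widebar G)$ for every Sylow subgroup, and then intersects over all such $\chi$ (using that $\bigcap\ker\chi=1$ and that $Z(G/(K_1\cap K_2))=(Z_{K_1}\cap Z_{K_2})/(K_1\cap K_2)$) to conclude $S\cap Z(G)=Z(S)$ in $G$ itself. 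This produces, for two distinct primes, nontrivial central subgroups $Z(P)$ and $Z(Q)$, at least one of which does not contain $N$; quotienting by that one preserves the hypothesis with a nontrivial image of $N$, and the induction on $|G|$ then goes through. Without this (or an equivalent device for the case $N\le Z(G)$), your proof is incomplete.
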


When we make the additional assumption that $N$ does not have prime power order, we obtain a much stronger conclusion.

\begin{introthm} \label{main part 2}
Let $N$ be a nontrivial, normal subgroup of a group $G$.  If $N$ does not have prime power order and every character in $\irr {G \mid N}$ has central type, then $G$ is a GVZ-group. 
\end{introthm}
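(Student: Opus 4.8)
The plan is to bootstrap from Theorem~\ref{nilp}. That theorem gives at once that $G$ is nilpotent, so $G=P_1\times\cdots\times P_k$ is the direct product of its Sylow subgroups, with $P_m$ the Sylow $p_m$-subgroup of $G$. Since $N\trianglelefteq G$ and $G$ is nilpotent, $N=N_1\times\cdots\times N_k$, where $N_m=N\cap P_m$ is the (normal in $P_m$) Sylow $p_m$-subgroup of $N$; and because $|N|$ is not a prime power, at least two of the $N_m$ are nontrivial. I will also use two standard facts: first, that for $\eta\in\irr H$ one has $\eta(1)^2\le|H:Z(\eta)|$, with equality if and only if $\eta$ has central type (the equality clause is just the fully-ramified characterization recalled in the introduction, applied to $H/\ker\eta$ and its center); and second, as noted in the introduction, that a direct product of GVZ-groups of prime power orders for distinct primes is again a GVZ-group. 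In view of the latter, it suffices to prove that each factor $P_i$ is a GVZ-group.

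So fix an index $i$ and an arbitrary $\chi_i\in\irr{P_i}$; the goal is to show $\chi_i$ has central type. Since at least two of the $N_m$ are nontrivial, we may pick $j\ne i$ with $N_j\ne1$, and then some $\theta_j\in\irr{P_j\mid N_j}$ (this set is nonempty because $N_j\ne1$). Set
\[
\psi \;=\; \chi_i\times\theta_j\times\prod_{\ell\ne i,j}1_{P_\ell}\;\in\;\irr G .
\]
Its kernel is $\ker\chi_i\times\ker\theta_j\times\prod_{\ell\ne i,j}P_\ell$, which fails to contain $N$ because $N_j\not\le\ker\theta_j$; hence $\psi\in\irr{G\mid N}$, so by hypothesis $\psi$ has central type. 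Since $Z(1_{P_\ell})=P_\ell$ and $Z(\cdot)$ respects external products, $Z(\psi)=Z(\chi_i)\times Z(\theta_j)\times\prod_{\ell\ne i,j}P_\ell$, and the equality case of the degree bound applied to $\psi$ gives
\[
\chi_i(1)^2\,\theta_j(1)^2 \;=\; \psi(1)^2 \;=\; |G:Z(\psi)| \;=\; |P_i:Z(\chi_i)|\cdot|P_j:Z(\theta_j)| .
\]
Since $\chi_i(1)^2\le|P_i:Z(\chi_i)|$ and $\theta_j(1)^2\le|P_j:Z(\theta_j)|$, the product identity forces both of these to be equalities; in particular $\chi_i$ has central type. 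As $\chi_i$ was an arbitrary member of $\irr{P_i}$, the group $P_i$ is a GVZ-group, and as $i$ was arbitrary, every Sylow subgroup of $G$ is a GVZ-group; therefore $G$ is a GVZ-group.

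The only substantive step is the twisting used to build $\psi$, and this is exactly where the hypothesis that $N$ is \emph{not} of prime power order is indispensable. It supplies a Sylow factor $P_j$, distinct from the $P_i$ under study, on which $N$ is already ``detected'' by $\theta_j$, so that an arbitrary $\chi_i\in\irr{P_i}$ --- including one trivial on $N_i$, or even one living on a Sylow factor with $N_i=1$ --- can be enlarged to a character in $\irr{G\mid N}$ and thereby forced to have central type. When $|N|$ is a prime power this mechanism is unavailable: every member of $\irr{G\mid N}$ is nontrivial on the single Sylow factor containing $N$, and the conclusion genuinely weakens to that of Theorem~\ref{nilp}. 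The auxiliary facts invoked --- the behavior of kernels, centers, and degrees under external tensor products, and the equality statement in $\eta(1)^2\le|H:Z(\eta)|$ --- are routine, and presumably already established in the preceding sections.
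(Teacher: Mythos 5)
Your proposal is correct and takes essentially the same approach as the paper: the paper obtains Theorem~B as a corollary of Theorem~\ref{nilp2}, whose proof is precisely your twisting argument --- invoke Theorem~\ref{nilp} for nilpotence, decompose characters over the Sylow direct factors, fix the coordinate on which $N$ is detected, vary the remaining coordinates arbitrarily, and compare $p$-parts in the equality $\chi(1)^2=|G:Z(\chi)|$. The only organizational difference is that the paper first proves the single-prime version (fixing $\vartheta_j$ on the factor meeting $N$) and then applies it twice using two Sylow subgroups of $N$, whereas you run the argument once directly; the mathematics is the same.
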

 
When $N$ does have prime power order, we also obtain strong information on the structure of $G$.  Note that since $G$ is nilpotent by Theorem \ref{nilp} $Q$ will be normal in this theorem.

\begin{introthm} \label{main partial}
Let $p$ be a prime, and suppose $N$ is a nontrivial, normal subgroup of a group $G$ such that every character in $\irr {G \mid N}$ has central type. If $N$ is a $p$-group and $Q$ is the (normal) Hall $p$-complement of $G$, then $Q$ is a GVZ-group. 
\end{introthm}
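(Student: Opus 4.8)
The plan is to reduce everything to characters of a direct product by invoking the nilpotency already established. By Theorem~\ref{nilp}, $G$ is nilpotent, so $G=P\times Q$ where $P$ is the Sylow $p$-subgroup of $G$; since $N$ is a normal $p$-subgroup, we have $N\le P$ and may identify $N$ with $N\times 1\le P\times Q$. Every irreducible character of $G$ has the form $\alpha\times\beta$ with $\alpha\in\irr P$ and $\beta\in\irr Q$. Because $N\ne 1$, the irreducible characters of $P$ have trivial common kernel, so I can fix some $\alpha\in\irr P$ with $N\not\le\ker\alpha$. A direct check shows that an element $(n,1)$ lies in $\ker(\alpha\times\beta)$ precisely when $n\in\ker\alpha$; hence $N\le\ker(\alpha\times\beta)$ would force $N\le\ker\alpha$, and since the latter fails so does the former. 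Thus $\alpha\times\beta\in\irr{G\mid N}$ for \emph{every} $\beta\in\irr Q$, and therefore each such $\alpha\times\beta$ has central type.

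The second ingredient is the identity $Z(\alpha\times\beta)=Z(\alpha)\times Z(\beta)$, which is immediate from $|(\alpha\times\beta)(x,y)|=|\alpha(x)|\,|\beta(y)|\le\alpha(1)\beta(1)$, with equality exactly when $x\in Z(\alpha)$ and $y\in Z(\beta)$. Recalling that ``$\alpha\times\beta$ has central type'' means precisely that $\alpha\times\beta$ vanishes on $G\setminus Z(\alpha\times\beta)$, I now fix an arbitrary $\beta\in\irr Q$ and an arbitrary $y\in Q\setminus Z(\beta)$. Then $(1,y)\notin Z(\alpha)\times Z(\beta)=Z(\alpha\times\beta)$, so $0=(\alpha\times\beta)(1,y)=\alpha(1)\beta(y)$, giving $\beta(y)=0$. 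Hence $\beta$ vanishes on $Q\setminus Z(\beta)$, i.e.\ $\beta$ has central type. Since $\beta$ was an arbitrary irreducible character of $Q$, this says exactly that $Q$ is a GVZ-group.

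I do not expect a genuine obstacle here: once Theorem~\ref{nilp} supplies the decomposition $G=P\times Q$, the argument is bookkeeping with characters of a direct product. The only points deserving a word of care are (i) the verification that membership in $\irr{G\mid N}$ depends only on the $P$-factor $\alpha$ --- handled by the kernel computation above, which does not even require $\gcd(|P|,|Q|)=1$ --- and (ii) the (standard) equivalence between the ``fully ramified over $Z(G/\ker\chi)$'' formulation of central type and the plain statement ``$\chi$ vanishes off $Z(\chi)$'', which follows from $Z(\chi)/\ker\chi=Z(G/\ker\chi)$ together with the automatic homogeneity of a character restricted to a central subgroup.
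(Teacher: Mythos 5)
Your proof is correct and follows essentially the same route as the paper's, which deduces this statement from the forward direction of Theorem~\ref{nilp2}: invoke Theorem~\ref{nilp} to write $G$ as a direct product of its Sylow subgroups, fix one tensor factor lying over $N$, and let the complementary factor range over all of $\irr{Q}$. The only real difference is the closing step, where the paper compares $p$-parts in the degree equation $\chi(1)^2=|G:Z(\chi)|$ while you evaluate $\alpha\times\beta$ at $(1,y)$ and use the vanishing formulation of central type; both are valid, and yours has the minor advantage of not needing coprimality of the factors.
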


In Theorem B of \cite{ourpre}, we show when $G$ is a GVZ-group that the nilpotence class of $G$ is bounded by $|\cd G|$, where $\cd G$ is the set of degrees of the irreducible characters of $G$.  We now obtain a similar result regarding the nilpotence class of $N$ when every character in $\irr {G \mid N}$ has central type.

\begin{introthm} \label{main part 3}
Let $N$ be a nontrivial, normal subgroup of a group $G$.  If every character in $\irr {G \mid N}$ has central type, then the nilpotence class of $N$ is at most $|\{ \chi (1) \mid \chi \in \irr {G \mid N} \}|$. 
\end{introthm}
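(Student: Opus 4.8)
The plan is to mimic the strategy used for the GVZ case (Theorem B of \cite{ourpre}), replacing the full character degree set by the degree set of $\irr{G\mid N}$ and working inside $N$. First I would fix a chief series of $G$ passing through $N$, say $1 = N_0 < N_1 < \cdots < N_k = N$ with each $N_i \trianglelefteq G$, and set $c$ equal to the nilpotence class of $N$. Since $G$ is nilpotent by Theorem \ref{nilp} and $\cl_G(g) = g[g,G]$ for every $g \in N$, the subgroup $N$ behaves, as far as its conjugacy classes and vanishing-off behavior are concerned, like a GVZ-group: every $\chi \in \irr{G\mid N}$ restricted to $N$ is a multiple of a single irreducible $\theta$, and $\chi$ vanishes off $Z(\chi)$, which contains the center of the corresponding section. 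The key object to track is the chain of centers $Z_i(\chi)$ or, better, the ascending sequence of ``vanishing-off'' subgroups associated with the characters lying over the successive quotients of the chief series inside $N$.

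The core of the argument should be a counting/separation lemma: if $\gamma_i(N)$ denotes the lower central series of $N$, I would show that each proper inclusion $\gamma_{i+1}(N) < \gamma_i(N)$ forces the appearance of a \emph{new} character degree among $\{\chi(1) : \chi \in \irr{G\mid N}\}$. Concretely, pick $\chi \in \irr{G \mid \gamma_i(N)}$ of central type; then $\chi_N$ is homogeneous over some $\theta \in \irr N$, and fullness gives $|G : Z(\chi)\ker\chi|$ as a square related to $(\chi(1)/\lambda(1))^2$ for the appropriate linear constituent downstairs. By exploiting that $\cl_G(g) = g[g,G]$ on $N$, one gets that $Z(\chi)$ meets the chief factors of $N$ in a predictable way, and the degree $\chi(1)$ strictly increases as we pass from characters trivial on $\gamma_i(N)$ to characters nontrivial on it. Summing these strict increases over $i = 1, \dots, c$ (where $\gamma_{c+1}(N) = 1 \ne \gamma_c(N)$) yields at least $c$ distinct values in the degree set, which is exactly the claimed bound $c \le |\{\chi(1) : \chi \in \irr{G\mid N}\}|$.

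The main obstacle I anticipate is making the ``new degree at each step'' claim precise when $N$ does not have prime power order, or when $N$ has prime power order but $G$ is larger: there the faithful-on-a-section hypothesis is about $G/\ker\chi$, not about $N$, so one must carefully relate $Z(\chi)$ (a subgroup of $G$) to the center of $N$ and to $\gamma_i(N)$, and ensure that the degree increments coming from different chief factors of $N$ are genuinely reflected in $\chi(1)$ rather than being absorbed by the part of $\chi$ coming from outside $N$. I would handle this by first reducing to the case where $G$ acts faithfully (i.e. $\bigcap_{\chi \in \irr{G\mid N}} \ker\chi = 1$, or more precisely working modulo the intersection of kernels, which is disjoint from $N$), then invoking Theorem \ref{main part 2} / Theorem \ref{main partial} to pin down the structure of $G$ relative to $N$, and finally running the GVZ-style degree count of \cite{ourpre} on the relevant $p$-part where the nilpotence class of $N$ is concentrated, using that the class of a nilpotent group is the maximum of the classes of its Sylow subgroups.
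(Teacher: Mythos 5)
Your proposal correctly identifies the Taketa-style shape of the argument, but all of the mathematical content is deferred to the ``counting/separation lemma'' --- that each proper step $\gamma_{i+1}(N) < \gamma_i(N)$ of the lower central series forces a new degree in $\cd{G\mid N}$ --- and you never supply the mechanism that makes this true. The mechanism in the paper's proof is concrete: induct on $|G|$. If no character in $\irr{G\mid N}$ is faithful, then Lemma~\ref{trivial int} together with the inductive hypothesis applied to each $G/\ker(\chi)$ finishes the argument. Otherwise some $\chi\in\irr{G\mid N}$ is faithful, hence $Z(\chi)=Z(G)$; since every central type character $\psi$ satisfies $\psi(1)^2=|G:Z(\psi)|\le |G:Z(G)|=\chi(1)^2$, this faithful character realizes the unique largest degree $d_n$, so every degree in $\cd{G/Z(G)\mid NZ(G)/Z(G)}$ is strictly smaller than $d_n$ and the degree set drops by at least one on passing to $G/Z(G)$; induction then places the appropriate lower central term of $N$ inside $Z(G)\cap N\le Z(N)$. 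Your sketch asserts that ``the degree $\chi(1)$ strictly increases as we pass from characters trivial on $\gamma_i(N)$ to characters nontrivial on it'' but gives no reason; the reason is precisely the minimality of $Z(G)$ among the centers $Z(\chi)$ of central type characters, which you do not isolate, and without it the claimed strict increase is unsubstantiated.

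The fallback plan you describe also does not close the argument. Reducing to $\bigcap_{\chi\in\irr{G\mid N}}\ker(\chi)=1$ is automatic by Lemma~\ref{trivial int} and is not the relevant notion of faithfulness: what matters is whether a \emph{single} character in $\irr{G\mid N}$ is faithful. Moreover, invoking Theorem B of \cite{ourpre} after applying Theorem~\ref{main partial} bounds $c(S)$ by $|\cd{S}|$ for the Sylow subgroup $S$ containing $N$, which is neither the right subgroup ($N$, not $S$) nor the right degree set ($\cd{G\mid N}$ can be much smaller than $\cd{S}$); after the decomposition $G=S\times Q$ you are left with exactly the original statement for the pair $(S, S\cap N)$, so nothing has been gained. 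Finally, your assertion that $\chi_N$ is homogeneous for every $\chi\in\irr{G\mid N}$ is unjustified: central type only forces homogeneity of the restriction to $Z(\chi)$, and $N$ need not lie in $Z(\chi)$.
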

 
As suggested above, a conjugacy class $\cl_G(g)$ satisfying $\cl_G(g)=g[g,G]$ can be considered a conjugacy class analog of a central type character. We call such a conjugacy class a {\it flat} class of $G$, or we simply say that the element $g$ is flat in $G$. We have seen that a group $G$ is nilpotent when each of its elements is flat.  We mention that flat elements and groups where all the elements are flat were first studied in \cite{flatness}.  We next present a generalization of this fact for normal subgroups suggested by Theorem~\ref{nilp}.

\begin{introthm}\label {flat 1}
Suppose every element in $N$ is flat in $G$. Then $N$ is contained in $Z_\infty(G)$, the hypercenter of $G$. In particular, $N$ is nilpotent.
\end{introthm}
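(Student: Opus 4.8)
The plan is to prove, by induction on $|N|$, that $[N, {}_n G] = 1$ for some $n \ge 0$, where $[N, {}_0 G] = N$ and $[N, {}_{k+1} G] = [[N, {}_k G], G]$; this is equivalent to $N \le Z_n(G) \le Z_\infty(G)$, and it forces $\gamma_{n+1}(N) = [N, {}_n N] \le [N, {}_n G] = 1$, so $N$ is nilpotent. The base case $N = 1$ is trivial, so assume $N \ne 1$. The crucial step will be to show that $N \cap Z(G) \ne 1$.

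For this I would let $M$ be a minimal normal subgroup of $G$ contained in $N$ (such an $M$ exists because $N$ is a nontrivial normal subgroup of the finite group $G$) and pick any $g \in M$ with $g \ne 1$. Since $M \trianglelefteq G$, every conjugate $g^x$ lies in $M$, so $[g, x] = g^{-1} g^x \in M$ and hence $[g, G] \le M$; moreover $[g, G] \trianglelefteq G$, using the identity $[g, x]^y = [g, y]^{-1} [g, xy]$. Minimality of $M$ then leaves only two possibilities: $[g, G] = 1$ or $[g, G] = M$. Now $g$ is flat in $G$ because $g \in N$, so $\cl_G(g) = g[g, G]$. If $[g, G] = M$, then (as $g \in M$) we would get $\cl_G(g) = gM = M$, which is absurd since $M$ contains the identity whereas the conjugacy class of the nontrivial element $g$ does not. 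Hence $[g, G] = 1$, i.e. $g \in M \cap Z(G) \le N \cap Z(G)$, proving the claim.

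With $Z := N \cap Z(G) \ne 1$ in hand — a normal subgroup of $G$ lying in $Z(G)$ — I would pass to $\bar G := G/Z$ and $\bar N := N/Z$. The flatness hypothesis descends: for $n \in N$, the class $\cl_{\bar G}(\bar n)$ is the image of $\cl_G(n) = n[n, G]$ under $G \to \bar G$, hence equals $\bar n \cdot ([n, G]Z/Z) = \bar n [\bar n, \bar G]$, so $\bar n$ is flat in $\bar G$; and since $Z \le N$, every element of $\bar N$ is of this form. As $|\bar N| < |N|$, the inductive hypothesis yields $[\bar N, {}_m \bar G] = 1$ for some $m$, i.e. $[N, {}_m G] \le Z \le Z(G)$, and therefore $[N, {}_{m+1} G] \le [Z(G), G] = 1$. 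This closes the induction, giving $N \le Z_\infty(G)$ and the nilpotence of $N$.

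I expect the only genuine obstacle to be the claim $N \cap Z(G) \ne 1$: the reduction to a minimal normal subgroup $M \le N$ is essentially forced, and the entire content is the clean contradiction that $\cl_G(g)$ cannot equal $M$, because a conjugacy class of a nontrivial element never contains the identity. Everything else — descending flatness to the central quotient $G/Z$, and the standard dictionary between the iterated commutators $[N, {}_n G]$, the upper central series, and nilpotence of $N$ — is routine bookkeeping.
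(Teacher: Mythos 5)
Your proposal is correct and follows essentially the same route as the paper: both arguments reduce to a minimal normal subgroup $M\le N$, use flatness of $1\ne g\in M$ together with the minimality dichotomy for the normal subgroup $[g,G]\le M$ to force $[g,G]=1$ (your observation that $\cl_G(g)=gM=M$ would contain the identity is the same contradiction the paper extracts from $g^{-1}\notin[g,G]$), and then finish by induction after passing to a central quotient. The only cosmetic differences are that you induct on $\norm{N}$ and quotient by $N\cap Z(G)$ where the paper inducts on $\norm{G}$ and quotients by $M$.
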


Finally, we consider the pairs $(G,N)$ where every member of $\irr{G\mid N}$ is fully ramified over $N$. We will see that these groups have very special structure. Moreover, they provide examples of groups satisfying the premise of Theorem~\ref{nilp} that are not GVZ-groups.

\begin{introthm}\label{central cam}
Let $N$ be a normal subgroup of $G$. If each member of $\irr {G \mid N}$ is fully ramified over $N$, then $G$ is a $p$-group and $N = Z(G)$.
\end{introthm}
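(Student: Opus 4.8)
The plan is to induct on $|G|$; throughout we may assume that $N$ is a proper subgroup of $G$.

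I would begin by extracting the structural consequences of the hypothesis. If $\theta\in\irr N$ is a nonprincipal irreducible constituent of some $\chi\in\irr{G\mid N}$, then $\chi$ being fully ramified over $N$ forces $\chi_N$ to be homogeneous, so $\theta$ is $G$-invariant; since every nonprincipal $\theta\in\irr N$ arises this way, $G$ fixes every member of $\irr N$, hence every conjugacy class of $N$. Because every $\chi\in\irr{G\mid N}$ vanishes on $G\setminus N$, the pair $(G,N)$ is a Camina pair; consequently $N\le G'$ (a linear character of $G$ nontrivial on $N$ would be fully ramified over $N$, forcing $G=N$) and $Z(G)\le N$ (a central element $z\notin N$ would satisfy $zN\subseteq\cl_G(z)=\{z\}$, so $N=1$). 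Since $G$ fixes all of $\irr{N/N'}$ it acts trivially on $N/N'$, i.e.\ $[N,G]\le N'$. Finally, over each nonprincipal $\theta\in\irr N$ there is a unique $\chi_\theta\in\irr G$, and $\ker\chi_\theta\cap N\le\ker\theta$.

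The next step is to show that $G$ is a $p$-group, and this is where I expect the main obstacle to lie, since it leans on the classification of Camina pairs. Using that $(G,N)$ is a Camina pair on which $G$ fixes every conjugacy class of $N$: by the structure theory of Camina pairs (Camina; Chillag--Macdonald; Dark--Scoppola), $N$ is nilpotent and $N$ or $G/N$ is a prime-power group. If $N$ is not a $p$-group, then $G/N$ is a $p$-group; writing $N=N_p\times N_{p'}$ with $N_{p'}\ne 1$, a Sylow $p$-subgroup $P$ of $G$ acts coprimely on each (characteristic, nilpotent) Sylow subgroup $N_r$, $r\ne p$, fixing all of its classes, hence trivially, so $[N_{p'},P]=1$; as $N_{p'}$ is a normal Hall $p'$-subgroup of $G$ this gives $G=N_{p'}\times P$, which is incompatible with the Camina condition (for $g\notin N$ the coset $gN$ projects onto all of $N_{p'}$, while $\cl_G(g)$ projects onto a single $N_{p'}$-class). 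So $N$ is a $p$-group; and if $G$ were not a $p$-group it would be a Frobenius group with kernel $N$, but a Frobenius complement fixes no nonprincipal character of $N$, contradicting that $G$ fixes all of $\irr N$. Hence $G$, and therefore $N$, is a $p$-group. If $N$ is abelian we are now done: $[N,G]\le N'=1$, so $N\le Z(G)$, and with $Z(G)\le N$ this gives $N=Z(G)$.

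So it remains to rule out $N$ nonabelian. Here I would use the induction: if $\theta\in\irr N$ is nonlinear with $\ker\theta\ne 1$, then $\ker\theta\trianglelefteq G$ and one checks (using $\ker\chi\cap N\le\ker(\text{its constituent})$ from above) that $(G/\ker\theta,\,N/\ker\theta)$ again satisfies the hypothesis; by induction $N/\ker\theta=Z(G/\ker\theta)$ is abelian, contradicting that $\theta$ descends to a nonlinear character there. Thus every nonlinear $\theta\in\irr N$ is faithful, so $N/L$ is abelian for every $1\ne L\trianglelefteq N$; hence $N'\le L$ for all such $L$, so $N$ has a unique minimal normal subgroup $W$, necessarily $W=N'$, and since $N$ is a $p$-group, $|W|=p$ and $W\le Z(N)\le Z(G)$. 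Now $W\cong C_p$ is central in $G$ and $[N,G]\le N'=W$, so $g\mapsto\bigl(nN'\mapsto[g,n]\bigr)$ is a well-defined homomorphism from $G$ into $\operatorname{Hom}(N/N',W)$ with kernel $C_G(N)$. As $\operatorname{Hom}(N/N',W)$ is elementary abelian, $G/C_G(N)$ is abelian, so $G'\le C_G(N)$; but $N\le G'$, so $N'=[N,N]\le[G',N]=1$, contradicting that $N$ is nonabelian. This completes the induction.
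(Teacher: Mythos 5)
Your route is genuinely different from the paper's and most of it holds up: you replace the paper's appeals to Isaacs--Knutson (Theorem D, for solvability) and to Kuisch--van der Waall with Brauer's permutation lemma plus the Glauberman-type fact that a coprime action fixing every conjugacy class must be trivial, and you replace the paper's inductive proof that $N'$ is minimal normal in $G$ with the stronger observation that every nonlinear character of $N$ must be faithful; your closing $\operatorname{Hom}(N/N',W)$ computation is the three-subgroups lemma in disguise, which is what the paper uses. The reduction to a Camina pair, the descent of the hypothesis to $G/\ker\theta$, and the endgame are all correct. One small slip: $Z(N)\le Z(G)$ is false in general; what you actually need is that a normal subgroup of order $p$ in the $p$-group $G$ is central in $G$, which is immediate. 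You also lean on the nilpotence of Camina kernels to split $N=N_p\times N_{p'}$, a much heavier citation than anything in the paper's proof, though a legitimate one if cited precisely.

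The genuine gap is the sentence ``if $G$ were not a $p$-group it would be a Frobenius group with kernel $N$.'' Camina's theorem is a disjunction --- $G$ is Frobenius over $N$, or $N$ is a $p$-group, or $G/N$ is a $p$-group --- so once you know that $N$ is a $p$-group the theorem tells you nothing further, and the implication you want is false for Camina pairs in general: if $G$ is a Frobenius group whose kernel $K$ is extraspecial, then $(G,Z(K))$ is a Camina pair in which $Z(K)$ is a $p$-group, $G$ is not a $p$-group, and $G$ is not Frobenius over $Z(K)$. You must invoke the $G$-invariance of $\irr{N}$ at this point, and the argument as written does not. The repair is the same device you use in the other branch: a nontrivial $q$-element $x$ of $G$ with $q\ne p$ lies outside the $p$-group $N$, acts coprimely on $N$ fixing every class, hence centralizes $N$; then for $1\ne n\in N$ the element $xn\in xN\subseteq\cl_G(x)$ has order divisible by $pq$ and so cannot be conjugate to $x$, forcing $N=1$, a contradiction. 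With that substitution the proof goes through.
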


\section{Partial GVZ groups}

Suppose that $G$ has a nontrivial, normal subgroup $N$ so that each member of $\irr {G \mid N}$ has central type. In this situation, we say that $G$ is a {\it partial GVZ-group with respect to $N$}.  In this section, we prove that partial GVZ-groups are nilpotent groups that are GVZ-groups up to perhaps a single Sylow direct factor.  We begin with a lemma about the intersection of the kernels of the characters in $\irr {G \mid N}$.

\begin{lem}\label{trivial int}
Let $N$ be a nontrivial, normal subgroup of $G$. Then the intersection of the kernels of the characters in $\irr{G\mid N}$ is trivial.
\end{lem}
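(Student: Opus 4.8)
Let $K$ denote the intersection of the kernels of the members of $\irr{G\mid N}$; the goal is to show $K = 1$. The plan is to first locate $K$ relative to $N$, and then, assuming $K \neq 1$, to produce a single character that both lies in $\irr{G\mid N}$ and does not contain $K$ in its kernel, a contradiction. For the first step I would use two standard facts: the intersection of the kernels of all irreducible characters of $G$ is trivial, and $\irr{G}$ is the disjoint union of $\irr{G/N}$ (the characters with $N$ in their kernel, whose kernels in $G$ intersect in exactly $N$) and $\irr{G\mid N}$. Intersecting all kernels therefore gives $N \cap K = 1$. Since $N$ and $K$ are both normal in $G$ and meet trivially, $[N,K] = 1$ and $NK$ is the internal direct product $N \times K$; also note that $\irr{G\mid N}$ is nonempty, since otherwise $N$ would lie in the kernel of every irreducible character of $G$.

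Now suppose $K \neq 1$. Then $K$ admits a nontrivial irreducible character $\lambda$, and since $N \neq 1$ it admits a nontrivial irreducible character $\theta$; the product $\theta \times \lambda$ is an irreducible character of $NK = N\times K$. Choose $\chi \in \irr{G}$ lying over $\theta \times \lambda$. Restricting $\chi$ to $NK$ and then to $N$, we see that $\chi_N$ has $\theta$ as a constituent (because $(\theta\times\lambda)_N = \lambda(1)\theta$), hence $\chi_N \neq \chi(1)\,1_N$ and $N \not\leq \ker\chi$, i.e. $\chi \in \irr{G\mid N}$; by the definition of $K$ this forces $K \leq \ker\chi$. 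On the other hand, $\chi_K$ has $\lambda$ as a constituent, and $\lambda \neq 1_K$, so $K \not\leq \ker\chi$. This contradiction shows $K = 1$.

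I do not expect a genuine obstacle in this argument; it is short once the right object is isolated. The only points needing a little care are bookkeeping ones — checking that $\irr{G\mid N}$ is nonempty so that $K$ is an honest intersection, and the routine fact that normal subgroups with trivial intersection centralize one another and span their direct product — together with the elementary Clifford-theoretic observation that a character lying over $\theta\times\lambda$ restricts on each factor of $N\times K$ to a positive multiple of that factor's component. The one conceptual point worth highlighting is the mechanism driving the proof: although $\irr{G\mid N}$ can be a proper, and possibly small, subset of $\irr{G}$, it is still rich enough to detect every nontrivial normal subgroup that meets $N$ trivially, precisely because one can tensor any nontrivial character of such a subgroup with a nontrivial character of $N$ and pass to an irreducible constituent over $G$.
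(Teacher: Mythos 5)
Your proof is correct, but it runs along a different track from the paper's. You first establish $K\cap N=1$ (exactly as the paper does, by splitting $\irr{G}$ into $\irr{G/N}$ and $\irr{G\mid N}$), and then argue by contradiction: if $K\neq 1$, you form the internal direct product $NK=N\times K$, tensor a nontrivial character of $N$ with a nontrivial character of $K$, and pass to an irreducible constituent $\chi$ of $G$ over it; this $\chi$ lies in $\irr{G\mid N}$ yet does not contain $K$ in its kernel. The paper instead proceeds directly: for any nonprincipal $\psi\in\irr{N}$, every $\chi\in\irr{G\mid\psi}$ lies in $\irr{G\mid N}$, so $K\leq\ker(\psi^G)$; since the kernel of a character induced from $N$ is contained in $N$, this gives $K\leq N$, and then $K=K\cap N=1$. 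The paper's route is slightly leaner --- it never needs $[N,K]=1$ or the direct product structure, only the containment $\ker(\psi^G)\leq N$ --- while your argument isolates the useful general principle that $\irr{G\mid N}$ detects every nontrivial normal subgroup meeting $N$ trivially. Both are complete; the one bookkeeping point you flag (nonemptiness of $\irr{G\mid N}$) is indeed needed and correctly handled.
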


\begin{proof}
Write $K=\bigcap_{\chi\in\irr{G\mid N}}\ker(\chi)$. Let $\psi\in\irr{N}$ be nonprincipal. Notice that $K\le\ker(\chi)$ if $N\nleq\ker(\chi)$. In particular, $K\le \ker(\chi)$ for each $\chi\in\irr{G\mid \psi}$. It follows that $K\le\ker(\psi^G)$, which is contained in $N$. But $K\cap N$ is trivial, as this is the intersection of the kernels of all of the irreducible characters of $G$. It follows that $K=1$, as desired.
\end{proof}

This next lemma appeared as Lemma 3.8 in \cite{chain2}.

\begin{lem}\label{center int}
Let $N$ and $M$ be normal subgroups of $G$. Write $Z(G/N)=Z_N/N$ and $Z(G/M)=Z_M/M$. Then $Z(G/(N\cap M))=(Z_N\cap Z_M)/(N\cap M)$.
\end{lem}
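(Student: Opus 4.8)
The plan is to pass to preimages in $G$ and reduce the statement to an equality of subgroups. Write $D=N\cap M$, and let $Z_D$ be the unique subgroup of $G$ containing $D$ with $Z_D/D=Z(G/D)$. Since $N/N$ and $M/M$ are trivial (hence central) in $G/N$ and $G/M$ respectively, we have $N\le Z_N$ and $M\le Z_M$, so $D\le Z_N\cap Z_M$ and the right-hand side of the asserted identity makes sense. It therefore suffices to prove $Z_D=Z_N\cap Z_M$, because dividing by $D$ then gives $Z(G/D)=Z_D/D=(Z_N\cap Z_M)/(N\cap M)$.

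The heart of the matter is the standard translation of centrality modulo a normal subgroup into a commutator condition: for $x\in G$ and a normal subgroup $L$ of $G$, the coset $xL$ lies in $Z(G/L)$ if and only if $[x,g]\in L$ for all $g\in G$, equivalently $[x,G]\le L$. I would apply this three times. With $L=D$ it says $x\in Z_D$ if and only if $[x,G]\le N\cap M$; since $[x,G]$ is a subgroup, this is the same as requiring $[x,G]\le N$ and $[x,G]\le M$ simultaneously. Applying the translation with $L=N$ and with $L=M$, these two conditions say precisely $x\in Z_N$ and $x\in Z_M$. Hence $x\in Z_D$ if and only if $x\in Z_N\cap Z_M$, which is the desired equality.

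I expect no genuine obstacle here: the argument is a direct unwinding of definitions, and the only point demanding (slight) care is the preliminary observation that $D\le Z_N\cap Z_M$, which is needed just so the quotient on the right-hand side is defined. One could alternatively phrase the proof through the natural embedding $G/(N\cap M)\hookrightarrow (G/N)\times(G/M)$, $x(N\cap M)\mapsto(xN,xM)$, and determine which elements of the image centralize the whole image; this leads to exactly the same commutator conditions and hence to the same conclusion.
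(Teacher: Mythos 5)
Your proof is correct. Note that the paper itself does not prove this lemma at all --- it simply quotes it as Lemma 3.8 of \cite{chain2} --- so there is no internal argument to compare against; your reduction to the commutator characterization ($xL\in Z(G/L)$ if and only if $[x,G]\le L$), applied with $L=N$, $L=M$, and $L=N\cap M$, is the standard and complete way to establish it, and every step checks out (the containment $[x,G]\le N\cap M$ is equivalent to the two separate containments for any subset, so even the parenthetical appeal to $[x,G]$ being a subgroup is not actually needed).
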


We now come to the first main theorem of this section.  We consider the Sylow subgroups of partial GVZ groups.

\begin{thm}\label{partial gvz}
Let $N$ be a nontrivial, normal subgroup of $G$ and suppose that every character $\chi\in\irr{G\mid N}$ is a central type character. Let $S$ be a Sylow subgroup of $G$. Then $S\cap Z(G)=Z(S)$. If $\vartheta\in\irr{S}$ and $N\nleq\ker(\vartheta)$, then $\vartheta$ is a central type character. Furthermore, if $S$ intersects $N$ trivially, then $S$ is a GVZ-group. 
\end{thm}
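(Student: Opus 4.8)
The statement has three parts, and the natural order is exactly the order in which they are written. Throughout I would fix a Sylow subgroup $S$ of $G$ (for whichever prime is relevant) and try to transfer the central-type hypothesis on $\irr{G\mid N}$ down to $S$ by restriction. The key technical tool is the equivalence in the introduction: $\chi$ being fully ramified over a normal subgroup is detected by the degree relation $|G:N| = (\chi(1)/\theta(1))^2$ together with homogeneity of $\chi_N$. So I expect each part to reduce to tracking character degrees and constituents under restriction to $S$, with Lemma~\ref{center int} available to handle centers of quotients.

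\textbf{Part 1 ($S\cap Z(G)=Z(S)$).} One inclusion is trivial since $S\cap Z(G)\le Z(S)$ always. For the reverse, I would argue as follows. We have $Z(G) = \bigcap_{\chi\in\irr G}Z(\chi)$, and since every nonprincipal $\psi\in\irr N$ has some $\chi\in\irr{G\mid\psi}$ lying above it, Lemma~\ref{trivial int} (or rather its proof) combined with the central-type hypothesis tells us a lot about the $Z(\chi)$ for $\chi\in\irr{G\mid N}$. The cleanest route is probably: for $\chi\in\irr{G\mid N}$ of central type, $\chi$ restricted to $G/\ker\chi$ is fully ramified over its center, so $\chi$ vanishes off $Z(\chi)$; restricting such a $\chi$ to $S$ and decomposing, one controls the centers of the constituents. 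Alternatively, one can phrase it via $Z_\infty$ or via the fact (consequence of Theorem~\ref{nilp}, which I am allowed to use) that $G$ is nilpotent — but the theorem is stated before \ref{nilp} in the paper's logical order only superficially; since \ref{nilp} appears earlier in the excerpt I may cite it, and nilpotence of $G$ makes $S$ normal and gives $G = S\times H$ for the Hall complement $H$, whence $Z(G) = Z(S)\times Z(H)$ and $S\cap Z(G) = Z(S)$ immediately. This is almost certainly the intended short argument.

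\textbf{Part 2 (central type descends to $S$).} Given $G = S\times H$ with $H$ the Hall $p'$-complement (using nilpotence), a character $\vartheta\in\irr S$ with $N\nleq\ker\vartheta$ can be inflated/multiplied to $\chi = \vartheta\times 1_H\in\irr G$, and $N\nleq\ker\chi$ as well (here one needs that $N\cap S\ne 1$ or more precisely that the ``$S$-part'' of $N$ is not in $\ker\vartheta$ — since $N$ is normal in the nilpotent group $G$, $N = (N\cap S)\times(N\cap H)$, and $N\nleq\ker\vartheta$ forces $N\cap S\nleq\ker\vartheta$). Then $\chi\in\irr{G\mid N}$ has central type by hypothesis. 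Now $Z(\chi) = Z(\vartheta)\times H$ and $\ker\chi = \ker\vartheta\times H$, so $G/\ker\chi = (S/\ker\vartheta)\times H$ with $\chi$ fully ramified over $Z(G/\ker\chi) = Z(S/\ker\vartheta)\times H$; projecting to the $S$-factor, $\vartheta$ is fully ramified over $Z(S/\ker\vartheta)$, i.e.\ $\vartheta$ has central type. The main thing to be careful about is the decomposition $N = (N\cap S)\times(N\cap H)$ and the behaviour of centers and kernels of external products, but these are routine.

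\textbf{Part 3 ($S\cap N = 1 \implies S$ is a GVZ-group).} If $S\cap N = 1$, then again writing $G = S\times H$ and $N = (N\cap S)\times(N\cap H) = 1\times(N\cap H)\le H$, every $\vartheta\in\irr S$ gives $\chi = \vartheta\times 1_H$ with $\ker\chi = \ker\vartheta\times H \supseteq N$, so $\chi\notin\irr{G\mid N}$ — which gives no information. So this direction needs a different idea: I would instead take an arbitrary $\vartheta\in\irr S$ and an arbitrary nonprincipal $\psi\in\irr{N}$ (possible since $N\ne 1$) and form $\chi = \vartheta\times\psi'$ where $\psi'\in\irr H$ lies over... but $\psi$ need not be $H$-invariant as a character of the subgroup $N\cap H$. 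The correct move: pick $\mu\in\irr H$ with $N\cap H\nleq\ker\mu$ (exists since $N\cap H\ne1$ because $N\ne1$ and $N\cap S=1$), so that $\mu$ has central type as a character of $H$ (by Part 2 applied with the Sylow of $H$... actually by the hypothesis directly, since $1\times\mu\in\irr{G\mid N}$). Then for \emph{any} $\vartheta\in\irr S$, the character $\vartheta\times\mu\in\irr G$ has $N\nleq\ker(\vartheta\times\mu)$, hence is of central type, hence fully ramified over $Z((\vartheta\times\mu)$-quotient$) = Z(S/\ker\vartheta)\times Z(H/\ker\mu)$, and projecting to $S$ shows $\vartheta$ is fully ramified over $Z(S/\ker\vartheta)$, i.e.\ $\vartheta$ has central type. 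Since $\vartheta\in\irr S$ was arbitrary, $S$ is a GVZ-group.

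\textbf{Main obstacle.} The real work is organizing the reduction to the direct-product decomposition $G = S\times H$ cleanly and justifying that everything (centers, kernels, fully-ramified-ness, the set $\irr{G\mid N}$) respects external products; once nilpotence of $G$ is in hand (from Theorem~\ref{nilp}), each individual verification is short, but one must be disciplined about which prime $S$ belongs to and about the splitting $N = (N\cap S)\times(N\cap H)$. The one genuinely non-formal point is the existence, in Part 3, of a character $\mu$ of the $p'$-part carrying $N\cap H$ outside its kernel — but that is immediate from $N\cap H\ne 1$. I expect no serious analytic difficulty, only bookkeeping.
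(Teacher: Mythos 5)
Your entire strategy rests on invoking Theorem~\ref{nilp} to get nilpotence of $G$ and hence the decomposition $G = S\times H$, but this is circular: in the paper, Theorem~\ref{nilp} is only \emph{stated} in the Introduction; its proof comes \emph{after} Theorem~\ref{partial gvz} and explicitly uses it (the nilpotence argument begins ``By Theorem~\ref{partial gvz}, $Z(G)$ is the product of the subgroups $Z(S)$\dots in particular $Z(G)>1$''). There is no independent route to nilpotence available here either: by Lemma~\ref{trivial int}, $G$ embeds in a product of central type quotients $G/\ker(\chi)$, which gives solvability (via Howlett--Isaacs) but not nilpotence, since a group of central type need not be nilpotent. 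So the hypothesis ``$G = S\times H$'' that underpins all three of your parts is exactly what one is not entitled to assume, and Part~1 --- which you dismiss as ``immediate'' --- is in fact the main content of the theorem.

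The paper's proof works without nilpotence by adapting the DeMeyer--Janusz argument inside a single central type quotient: for $\vartheta\in\irr{S}$ with $N\nleq\ker(\vartheta)$ one chooses $\chi\in\irr{G\mid N}$ over $\vartheta$, passes to $\widebar{G}=G/\ker(\chi)$, sets $R=Z(\chi)S$ so that $\widebar{R}=O_{p'}(Z(\widebar{G}))\times\widebar{S}$, and compares degrees of $\gamma\in\irr{R\mid\vartheta}$ inducing to $\chi$ to conclude $|\widebar{S}:\widebar{S}\cap Z(\widebar{G})|$ divides $\vartheta(1)^2$; combined with $\widebar{S}\cap Z(\widebar{G})=Z(\widebar{S})$ from \cite[Theorem 2]{DJ} this gives the second assertion. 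The first assertion is then obtained by intersecting $Z(S/(S\cap\ker(\chi)))=(S\cap Z(\chi))/(S\cap\ker(\chi))$ over all $\chi\in\irr{G\mid N}$ using Lemma~\ref{center int} and Lemma~\ref{trivial int}, and the third by noting $S\cap\ker(\chi)=1$ so that $S$ itself is (isomorphic to) a Sylow subgroup of a central type group. If you granted yourself nilpotence your Parts~2 and~3 would essentially go through, but as written the proposal does not prove the theorem; you would need to either reprove nilpotence from scratch without this theorem (which the paper does not know how to do) or replace the direct-product reduction with an argument, like the one above, that works one central type quotient at a time.
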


\begin{proof}	
To prove the first statement, we essentially use the argument given in the proof of \cite[Theorem 2]{DJ}, taking a bit of extra care where necessary. Let $S$ be a Sylow subgroup of $G$, say for the prime $p$. Let $\vartheta\in\irr{S}$ satisfy $N\nleq\ker(\vartheta)$. Since $N\nleq\ker(\vartheta^G)$, there exists a character $\chi\in\irr{G\mid N}$ lying over $\vartheta$. In particular, $\chi$ is a central type character of $G$. Write $\chi_{Z(\chi)}=\chi(1)\lambda$, where $\lambda\in\irr{Z(\chi)}$. Let $R=Z(\chi)S$. Let $\gamma\in\irr{R\mid\vartheta}$ such that $\inner{\chi_R,\gamma}>0$. Note that $S\cap\ker(\chi)\le\ker(\vartheta)$, so $\vartheta$ can be considered a character of $\widebar{S}=S\ker(\chi)/\ker(\chi)$. Similarly $\gamma$ can be considered a character of $R/\ker(\chi)$. Let $\widebar{X}$ denote the image of $X\subseteq G$ under the projection $G\to G/\ker(\chi)$. Then $\widebar{R}=Z(\widebar{G})\widebar{S}=O_{p'}(Z(\widebar{G}))\times\widebar{S}$ and so $\gamma_{\widebar{S}}=\vartheta$. Since $\gamma$ lies over $\lambda$ and $\lambda^G$ is homogeneous, $\gamma^{\widebar{G}}=e\chi$ for some integer $e$. Then $e=\norm{G:R}\gamma(1)/\chi(1)=\norm{G:R}\vartheta(1)/\chi(1)$. Since $\norm{G:R}$ is not divisible by $p$, it follows that $\chi(1)^2_p=\norm{G:Z(\chi)}_p=\norm{S:S\cap Z(\chi)}=\norm{\widebar{S}:\widebar{S}\cap Z(\widebar{G})}$ divides $\vartheta(1)^2$. Since $\widebar{G}$ is a group of central type, $\widebar{S}\cap Z(\widebar{G}) = Z(\widebar{S})$ \cite[Theorem 2]{DJ}, from which it follows that $\vartheta$ is a central type character of $S$.
	
Now we show that $S\cap Z(G)=Z(S)$. Let $\chi\in\irr{G\mid N}$. As before, let $\widebar{X}$ denote the image of $X\subseteq G$ under the projection $G\to G/\ker(\chi)$. Since $\widebar{G}$ is a group of central type, we conclude from \cite[Theorem 2]{DJ} that $Z(\widebar{S})=\widebar{S}\cap Z(\widebar{G})$. By the modular law, we see that $Z(\widebar{S})=\widebar{S}\cap Z(\widebar{G})=\widebar{S\cap Z(\chi)}$. The isomorphism $\widebar{S}\to S/(S\cap\ker(\chi))$ therefore implies that $Z(S/(S\cap \ker(\chi))) = (S\cap Z(\chi))/(S\cap\ker(\chi))$. Writing $K=\bigcap_{\chi\in\irr{G\mid N}}\ker(\chi)$ and $Z=\bigcap_{\chi\in\irr{G\mid N}}Z(\chi)$, it follows from Lemma~\ref{center int} that $Z(S/(S\cap K))=(S\cap Z)/(S\cap K)$, as $\chi\in\irr{G\mid N}$ was chosen arbitrarily. Since $K=1$ by Lemma~\ref{trivial int}, we conclude from Lemma~\ref{center int} that $Z=Z(G)$. Thus $S\cap Z(G)=Z(S)$, as required.

Finally assume that $S\cap N=1$. Let $\psi\in\irr{N}$ be nonprincipal and let $\chi\in\irr{G\mid \psi}$. Then $\chi\in\irr{G\mid N}$, so $\chi$ is a central type character. Then $G/\ker(\chi)$ is a group of central type and $S\cap \ker(\chi)\le S\cap N=1$. Thus $S\ker(\chi)/\ker(\chi)\cong S$ is a group of central type \cite[Theorem 2]{DJ}. 
\end{proof}

With Theorem~\ref{partial gvz}, we will show $G$ must actually be nilpotent, which is the first statement in Theorem~\ref{nilp}. Before doing this however, we prove the second statement in Theorem~\ref{nilp}. In fact, we prove something slightly stronger.

\begin{thm}\label{flat}
Let $N$ be a normal subgroup of $G$. If every member of $\irr{G\mid [N,G]}$ has central type, then every element of $N$ is flat in $G$.
\end{thm}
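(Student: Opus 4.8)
The plan is to translate flatness of an element into a statement about character values and then run a short case analysis. I will use the following reformulation, which is elementary (it follows from column orthogonality): an element $g\in G$ is flat in $G$ if and only if every $\chi\in\irr{G}$ with $\chi(g)\neq 0$ satisfies $[g,G]\le\ker(\chi)$, equivalently, $g\ker(\chi)\in Z(G/\ker(\chi))$ whenever $\chi(g)\neq 0$. For completeness: writing $L=[g,G]$ (the normal subgroup $[\langle g\rangle,G]$), one has $\cl_G(g)\subseteq gL$ always, and averaging the column-orthogonality relation $\sum_{\chi\in\irr{G}}\overline{\chi(g)}\chi(gl)=|C_G(g)|$ for $gl\sim g$ over $l\in L$ — using that $|L|^{-1}\sum_{l\in L}\chi(gl)$ equals $\chi(g)$ when $L\le\ker(\chi)$ and $0$ otherwise — shows that $gL\subseteq\cl_G(g)$ exactly when $\chi(g)=0$ for every $\chi\in\irr{G}$ with $L\not\le\ker(\chi)$.

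Granting this criterion, let $g\in N$ and let $\chi\in\irr{G}$ with $\chi(g)\neq 0$; it suffices to prove $g\ker(\chi)\in Z(G/\ker(\chi))$. I split into two cases according to whether $[N,G]\le\ker(\chi)$. First, suppose $[N,G]\le\ker(\chi)$. Since $g\in N$, we have $[g,x]\in[N,G]\le\ker(\chi)$ for every $x\in G$, so $g\ker(\chi)$ commutes with every element of $G/\ker(\chi)$; that is, $g\ker(\chi)\in Z(G/\ker(\chi))$ (the hypothesis $\chi(g)\neq 0$ is not even needed here). Second, suppose $[N,G]\not\le\ker(\chi)$. Then $\chi\in\irr{G\mid[N,G]}$, so by hypothesis $\chi$ has central type, and hence $\chi$ vanishes on $G\setminus Z(\chi)$. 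As $\chi(g)\neq 0$, this forces $g\in Z(\chi)$, i.e.\ $g\ker(\chi)\in Z(\chi)/\ker(\chi)=Z(G/\ker(\chi))$. In both cases $g$ meets the criterion, so every element of $N$ is flat in $G$.

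I expect the only delicate point to be the flatness criterion itself: one must take care that $[g,G]$ is read as the normal subgroup $[\langle g\rangle,G]$, and verify the averaging identity — in particular that, for $L\trianglelefteq G$, the operator $|L|^{-1}\sum_{l\in L}\rho(l)$ is $0$ or the identity on an irreducible $G$-module, since its image is a $G$-submodule. Everything after that is the two-line case distinction above, the point being that the two cases ($[N,G]$ inside or outside $\ker(\chi)$) together exhaust all $\chi$ with $\chi(g)\neq 0$ and in each case land $g$ in the relevant center. If this criterion has already been recorded as a lemma, the proof of the theorem is essentially the second paragraph verbatim.
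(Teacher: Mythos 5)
Your proof is correct and takes essentially the same route as the paper's: both arguments rest on the column-orthogonality identity $|C_G(g)|=|G/[g,G]|+\sum_{\chi\in\irr{G\mid [g,G]}}|\chi(g)|^2$ together with the fact that central type characters vanish off their centers. You have merely repackaged that computation as a standalone flatness criterion before running the two-case check, whereas the paper applies the identity directly to each $g\in N$.
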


\begin{proof}
Let $g\in N$. By column orthogonality
\[\norm{C_G(g)}=\sum_{\chi\in\irr{G\mid[g,G]}}\norm{\chi(g)}^2+\sum_{\chi\in\irr{G/[g,G]}}\norm{\chi(g)}^2.\]
Since $\irr{G/[g,G]}$ is exactly the set of irreducible characters $\chi$ for which $g\in Z(\chi)$, we see that the second sum is $\norm{G/[g,G]}$. Note that $[g,G]\le [N,G]$ and so $\irr{G\mid[g,G]}\subseteq\irr{G\mid [N,G]}$. Thus each member of $\irr{G\mid [g,G]}$ is a central type character. Also $g\notin Z(\chi)$ for any member of $\irr{G\mid [g,G]}$. This means the first sum is zero, which implies $\norm{C_G(g)}=\norm{G/[g,G]}$. We deduce that $\norm{\mathrm{cl}_G(g)}=\norm{[g,G]}$, and the result follows.
\end{proof}

Next, we provide an example that shows the converse of Theorem \ref{flat} is false.

\begin{exam}
The converse to Lemma~\ref{flat} is false. Let $G$ be nonnilpotent with $N = Z_2(G) > Z(G)$. Then $g[g,G] = \mathrm{cl}_G(g)$ for every $g\in N$, but by Theorem~\ref{nilp} there must exist a character in $\irr{G\mid [N,G]}$ that is not central type since $G$ is not nilpotent.
\end{exam}

Although the converse of Lemma~\ref{flat} is false, we can still determine that $N$ is nilpotent under the assumption that $\mathrm{cl}_G(g)=g[g,G]$ for all elements $g\in N$. In fact, $N$ is hypercentral in this case.
We will prove this in Theorem~\ref{flat 1}.

\begin{proof}[Proof of Theorem~\ref{flat 1}]
%

We work by induction on $\norm{G}$. Let $M\le N$ be minimal normal, and let $1\ne g\in M$.  We know $1 \notin \cl_G(g)$.  Since $g[g,G] = \cl_G(g)$, we see that $g^{-1} \notin [g,G]$, and thus, $g \notin [g,G]$.  Hence, $[g,G] < \inner{g}^G$ which must be all of $M$ by minimality. Thus, $[g,G] = 1$ and $g\in Z(G)$. Since $M$ is minimal normal, we conclude that $M\le Z(G)$. Observe that $G/M$ satisfies the hypotheses of the theorem, so the inductive hypothesis implies that $N/M\le Z_\infty(N/M)$. Since $M\le Z(G)$, this implies $N\le Z_\infty(G)$, as desired.
\end{proof}

\begin{rem}
Observe that if $N$ satisfies the premise of Theorem~\ref{flat 1}, then for each $g\in N$ and $\chi\in\irr{G\mid N}$, either $g\in Z(\chi_N)$ or $\chi(g) = 0$. We deduce from Clifford's Theorem that either $g\in Z(\psi)$ or $\sum_{x\in G}\psi^x(g) = 0$ for each $g\in N$ and $\psi\in\irr{N}$. In particular, every $G$-invariant irreducible character of $N$ has central type.
\end{rem}

Since flat elements stay flat after descending to a quotient, we can still guarantee the nilpotence of $N$ under a slightly weaker hypothesis.

\begin{cor}
	Suppose that every element in $N\setminus Z(N)$ is flat in $G$. Then $N$ is nilpotent.
\end{cor}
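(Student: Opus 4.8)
The plan is to reduce to Theorem~\ref{flat 1} by passing to the quotient by $Z(N)$. Since $Z(N)$ is characteristic in $N$ and $N\trianglelefteq G$, the subgroup $Z(N)$ is normal in $G$, so we may form $\overline{G}=G/Z(N)$, in which $\overline{N}:=N/Z(N)$ is a normal subgroup. The goal is then to apply Theorem~\ref{flat 1} to the pair $(\overline{G},\overline{N})$ and to conclude nilpotence of $N$ from nilpotence of $\overline{N}$.

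The key step is to check that every element of $\overline{N}$ is flat in $\overline{G}$. Let $\overline{g}\in\overline{N}$ and pick a preimage $g\in N$. If $\overline{g}=1$ it is flat trivially. Otherwise $\overline{g}\neq 1$, so no preimage of $\overline{g}$ can lie in $Z(N)$; thus $g\in N\setminus Z(N)$ and $g$ is flat in $G$ by hypothesis, i.e.\ $\cl_G(g)=g[g,G]$. Applying the projection $G\to\overline{G}$ gives $\cl_{\overline{G}}(\overline{g})=\overline{\cl_G(g)}=\overline{g}\,\overline{[g,G]}=\overline{g}[\overline{g},\overline{G}]$, so $\overline{g}$ is flat in $\overline{G}$ (this is the observation, noted just above the corollary, that flatness is inherited by quotient images). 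Hence Theorem~\ref{flat 1} applies and yields that $\overline{N}=N/Z(N)$ is nilpotent. Finally, a group whose quotient by its center is nilpotent is itself nilpotent: pulling back the upper central series of $N/Z(N)$ produces a central series of $N$, so $N$ is nilpotent, as desired.

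The argument is short, and I do not expect a genuine obstacle. The two points that require (only routine) care are the standard fact that flatness of $g$ in $G$ descends to flatness of $gZ(N)$ in $G/Z(N)$, and the equally standard fact that nilpotence of $N/Z(N)$ forces nilpotence of $N$; the one genuinely substantive input, Theorem~\ref{flat 1}, is used as a black box.
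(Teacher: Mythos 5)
Your proof is correct and follows the same route as the paper: pass to $G/Z(N)$, observe that flatness descends to the quotient so every element of $N/Z(N)$ is flat in $G/Z(N)$, apply Theorem~\ref{flat 1}, and conclude from nilpotence of $N/Z(N)$ that $N$ is nilpotent. The only difference is that you spell out the routine verifications (the case $\overline{g}=1$, and pulling back the upper central series) that the paper leaves implicit.
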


\begin{proof}
	Write $Z = Z(N)$, and let $\widebar{X}$ denote the image of $X \subseteq G$ under the projection $G\to G/Z$. Since $\mathrm{cl}_G (g) = g[g,G]$ for every $g\in N\setminus Z$, it follows that $\mathrm{cl}_{\widebar{G}} (\bar{g}) = \bar{g} [\bar{g},\widebar{G}]$ for every element $\bar{g} \in\widebar{N}$. Therefore, $\widebar{N}$ is nilpotent by Theorem~\ref{flat 1}, which clearly implies that $N$ is nilpotent.
\end{proof}

We now present the proof of Theorem~\ref{nilp}.

\begin{proof}[Proof of Theorem \ref{nilp}]
The second statement follows immediately from Theorem~\ref{flat}. To prove that $G$ is nilpotent, we work by induction on $\norm{G}$. By Theorem~\ref{partial gvz}, $Z(G)$ is the product the subgroups $Z(S)$ as $S$ ranges over all Sylow subgroups of $G$. In particular $Z(G)>1$. If $G$ is a $p$-group, we are done. So assume that $G$ is divisible by at least two distinct primes $p$ and $q$. Let $P$ be a Sylow $p$-subgroup of $G$, and let $Q$ be a Sylow $q$-subgroup of $G$. By Theorem~\ref{partial gvz}, $Z(P)$ and $Z(Q)$ are central and therefore normal in $G$. Without loss, we may assume that $N\nleq Z = Z(P)$.  Then $\irr{G/Z\mid NZ/Z}\subseteq\irr{G\mid N}$ is nonempty. By the inductive hypothesis, $G/Z$ is nilpotent. Since $Z\le Z(G)$, it follows that $G$ is nilpotent.
\end{proof}

We have seen that $G$ is nilpotent if all the characters in $\irr {G \mid N}$ have central type. It is interesting to ask if there are subsets of $G$ that imply $G$ is nilpotent, given each of its elements are flat in $G$. We have seen that this is not the case for an arbitrary normal subgroup of $G$, even if it is noncentral. It seems natural to ask if having all elements lying outside of $N$ being flat implies that $G$ is nilpotent. The answer is also no. As an example, let $G$ be a Frobenius group of order $pq$, where $p$ and $q$ are primes. In this case, every element lying outside of the Frobenius kernel will be flat, and $G$ is certainly not nilpotent.  

This next theorem includes Theorem \ref{main partial} from the Introduction.

\begin{thm} \label{nilp2}
Let $N$ be a normal subgroup of $G$. Then all characters in $\irr {G \mid N}$ have central type if and only if $G=S\times Q$, where $Q$ is a GVZ-group, $S$ is a Sylow subgroup of $G$ that intersects $N$ nontrivially, and all characters in $\irr{S\mid S\cap N}$ have central type.
\end{thm}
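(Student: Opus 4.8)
The plan is to prove the two directions separately, with the forward direction being the substantive one. For the forward direction, assume every character in $\irr{G\mid N}$ has central type. By Theorem~\ref{nilp}, $G$ is nilpotent, so $G = S_1\times\cdots\times S_k$ is the direct product of its Sylow subgroups, and $N = (N\cap S_1)\times\cdots\times(N\cap S_k)$ correspondingly. The key structural claim is that $N$ meets at most one Sylow factor nontrivially. Suppose for contradiction that $N\cap S_i$ and $N\cap S_j$ are both nontrivial for distinct primes $p_i\neq p_j$; then $N$ does not have prime power order, so by Theorem~\ref{main part 2} (Theorem~\ref{nilp2} in the excerpt numbering — i.e. ``$N$ not of prime power order'' case) $G$ is already a GVZ-group. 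In that case I would take $S$ to be any Sylow subgroup meeting $N$ nontrivially and $Q$ the product of the remaining Sylow subgroups: both $S$ and $Q$ are GVZ-groups (a direct factor of a GVZ-group is a GVZ-group, and $S$ being a $p$-group GVZ-group has all of $\irr{S}\supseteq\irr{S\mid S\cap N}$ of central type), so the asserted decomposition holds. Otherwise $N$ is a $p$-group for a single prime $p$; write $S$ for the Sylow $p$-subgroup, so $N\le S$ and $N\cap S = N\neq 1$, and let $Q$ be the Hall $p$-complement, so $G = S\times Q$.

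Next I would verify the two remaining assertions in this case: that $Q$ is a GVZ-group, and that every character in $\irr{S\mid S\cap N} = \irr{S\mid N}$ has central type. The first is exactly Theorem~\ref{main partial} (called \ref{main partial} in the introduction): since $N$ is a $p$-group and $Q$ is the Hall $p$-complement, $Q$ is a GVZ-group. For the second, I would use that $\irr{G} = \{\vartheta\times\varphi : \vartheta\in\irr{S},\ \varphi\in\irr{Q}\}$ under the direct product, with $\ker(\vartheta\times\varphi) = \ker(\vartheta)\times\ker(\varphi)$ and $Z(\vartheta\times\varphi) = Z(\vartheta)\times Z(\varphi)$. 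Given $\vartheta\in\irr{S\mid N}$, the character $\chi = \vartheta\times 1_Q$ lies in $\irr{G\mid N}$ (since $N\le S$ and $N\nleq\ker\vartheta$), hence has central type; because $1_Q$ contributes trivially to kernel and center, $\chi$ having central type forces $\vartheta$ to have central type. This uses the elementary fact that for a direct product $A\times B$ and $\alpha\in\irr A$, $\beta\in\irr B$, the character $\alpha\times\beta$ vanishes off $Z(\alpha\times\beta)$ iff $\alpha$ vanishes off $Z(\alpha)$ and $\beta$ vanishes off $Z(\beta)$ — I would state this as a one-line observation about vanishing sets in a direct product.

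For the converse, assume $G = S\times Q$ with $Q$ a GVZ-group, $S$ a Sylow subgroup with $S\cap N\neq 1$, and every character in $\irr{S\mid S\cap N}$ of central type. Since $G$ is the direct product of $S$ with the $p$-group-free GVZ-group $Q$, and $N\le S$ (one should check this: as $S$ is the unique Sylow $p$-subgroup and $N\cap S'\le S'$ for any other Sylow factor $S'$ must be trivial for $G$ to decompose as stated with $N$ split accordingly — actually the cleanest route is to note $N\cap Q\trianglelefteq G$ and, since the hypothesis only constrains $S\cap N$, to reduce to $N\le S$ by replacing $N$ with $N\cap S$ if necessary; I would make this bookkeeping precise). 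Then any $\chi\in\irr{G\mid N}$ has the form $\vartheta\times\varphi$ with $\vartheta\in\irr{S}$, $N\nleq\ker\vartheta$, i.e. $\vartheta\in\irr{S\mid N}$, which has central type by hypothesis, and $\varphi\in\irr Q$ has central type because $Q$ is a GVZ-group; again by the direct-product observation, $\chi = \vartheta\times\varphi$ has central type.

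The main obstacle I anticipate is the bookkeeping around which Sylow factor $N$ lives in and ensuring the statement is internally consistent (in particular reconciling ``$S$ intersects $N$ nontrivially'' with the possibility that $N$ spreads across several Sylow factors, which is precisely where Theorem~\ref{main part 2} must be invoked to collapse everything to a GVZ-group). The character-theoretic content is light once the direct-product decomposition is in hand; the real work has already been done in Theorems~\ref{nilp}, \ref{main part 2}, and \ref{main partial}, and this theorem is essentially their common packaging.
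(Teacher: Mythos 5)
Your forward direction is circular relative to the paper's development: both results you lean on, Theorem~\ref{main part 2} and Theorem~\ref{main partial}, are \emph{deduced from} Theorem~\ref{nilp2} in the paper (Theorem~\ref{main partial} is explicitly ``included in'' Theorem~\ref{nilp2}, and Theorem~\ref{main part 2} is proved as a corollary of it). In particular, the assertion that the Hall $p$-complement $Q$ is a GVZ-group is precisely the nontrivial content of the theorem, and you have assumed it rather than proved it. The paper supplies the missing argument by a perturbation of direct factors: writing $G=S_1\times\dotsb\times S_n$ (nilpotence coming from Theorem~\ref{nilp}), fix $\chi=\vartheta_1\times\dotsb\times\vartheta_n\in\irr{G\mid N}$ and an index $j$ with $S_j\cap N\nleq\ker(\vartheta_j)$; then for \emph{arbitrary} $\mu_i\in\irr{S_i}$ ($i\ne j$) the character $\xi=\mu_1\times\dotsb\times\vartheta_j\times\dotsb\times\mu_n$ still lies in $\irr{G\mid N}$, hence has central type, and comparing $p_i$-parts of $\xi(1)^2=|G:Z(\xi)|$ forces $\mu_i(1)^2=|S_i:Z(\mu_i)|$ for every $i$. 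Since the $\mu_i$ were arbitrary, each $S_i$ with $i\ne j$ is a GVZ-group. If you want to keep your high-level structure, the ingredient legitimately available \emph{before} Theorem~\ref{nilp2} is the final clause of Theorem~\ref{partial gvz} (a Sylow subgroup meeting $N$ trivially is a GVZ-group); citing that instead of Theorems~\ref{main part 2} and~\ref{main partial} would repair the circularity. Note also that ``$N$ meets at most one Sylow factor'' is not a structural fact (take $N=G$ for a GVZ-group of non-prime-power order); you do handle that case, but again only via the circular citation.

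For the converse you correctly flag the bookkeeping problem when $N\cap Q>1$, but your proposed fix --- replacing $N$ by $N\cap S$ --- does not suffice, since $\irr{G\mid N}$ properly contains $\irr{G\mid N\cap S}$ and the conclusion concerns the former. The paper's converse works with $\chi=\vartheta\times\gamma$ and the identity $\chi_N=\gamma(1)\vartheta_N$, i.e.\ it treats $N$ as lying inside $S$; if $N$ genuinely has a nontrivial $Q$-part one can have $\chi\in\irr{G\mid N}$ with $\vartheta\notin\irr{S\mid S\cap N}$, and then the hypothesis gives no control over $\vartheta$. So this direction needs $N\le S$ (or $S$ itself a GVZ-group); it is better to make that assumption explicit than to attempt a reduction that weakens the conclusion.
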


\begin{proof}
Suppose first that all of the characters in $\irr {G \mid N}$ are central type. Let $\{p_1,p_2,\dotsc,p_n\}$ be the set of prime divisors of $\norm{G}$, and assume that $\{p_1,p_2,\dotsc,p_\ell\}$ is the set of prime divisors of $\norm{N}$. Let $S_i$ be a Sylow $p_i$-subgroup of $G$ for each $i$. From Theorem \ref{nilp} we see that $G$ is nilpotent, so $G$ is the direct product of the $S_i$. Let $\chi\in\irr{G\mid N}$. Then there exist characters $\vartheta_i\in\irr{S_i}$ such that $\chi = \vartheta_1\times\vartheta_2\times\dotsb\times\vartheta_n$. Since $N\nleq\ker(\chi)$, there exists $j$, $1\le j\le \ell$, such that $S_j\cap N\nleq\ker(\vartheta_j)$. Let $\mu_i\in\irr{S_i}$ for each $1\le i\le n$, $i\ne j$. Write $\mu_j = \vartheta_j$ and let $\xi = \mu_1\times \mu_2\times\dotsb\times\mu_n$. Then $\xi\in\irr{G\mid N}$, so $\xi$ is a central type character. Thus $\prod_i\mu_i(1)^2 = \chi(1)^2 = \norm{G:Z(\chi)} = \prod_i\norm{S_i:Z(\mu_i)}$. By comparing $p_i$-parts for each $i$, we deduce that $\mu_i(1)^2 = \norm{S_i:Z(\mu_i)}$ for each $i$. Since each $\mu_i$, $i\ne j$, was chosen arbitrarily, it follows that each $S_i$ for $i\ne j$ is a GVZ-group. Thus $Q = \prod_{i\ne j}S_i$ is also a GVZ-group, and $G = S_j\times Q$. Finally, let $\vartheta\in\irr{S_j\mid S_j\cap N}$. Then $\vartheta$ lifts to a character $\tilde{\vartheta}\in\irr{G/Q\mid NQ/Q}$. Since $N\nleq\ker(\tilde{\vartheta})$, $\tilde{\vartheta}$ has central type. Thus $\vartheta(1)^2 = \tilde{\vartheta}(1)^2 = \norm{G:Z(\tilde{\vartheta})}=\norm{S_j:Z(\vartheta)}$, from which it follows that $\vartheta$ has central type. This completes the proof of the forward direction.

Now, suppose that $G = S\times Q$, $Q$ is a GVZ-group, $S$ is a Sylow subgroup for which $S\cap N > 1$, and all characters in $\irr{S\mid S\cap N}$ have central type.  Suppose $\chi \in \irr {G \mid N}$.  We can write $\chi = \vartheta \times \gamma$ where $\vartheta \in \irr S$ and $\gamma \in \irr Q$.  Notice that $\chi_N = \gamma (1) \vartheta_N$ and so $\vartheta \in \irr {S \mid S\cap N}$.   It is not difficult to see that $Z(\chi) = Z(\vartheta) \times Z(\gamma)$.  We know, by hypothesis, that $\vartheta$ is fully ramified with respect to $Z(\vartheta)$ and $\gamma$ is fully ramified with respect to $Z(\gamma)$.  It is not difficult to see that $\chi$ is fully ramified with respect to $Z(\chi)$.  
\end{proof}

As a corollary, we obtain Theorem \ref{main part 2} from the Introduction.

\begin{cor}
Let $N$ be a normal subgroup of $G$. If all characters in $\irr {G \mid N}$ have central type and $N$ is not a $p$-group for any prime $p$, then $G$ is a GVZ-group.
\end{cor}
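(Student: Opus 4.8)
The plan is to read this off the structure theorem established as Theorem~\ref{nilp2}, together with the fact (recorded in the Introduction) that a direct product of GVZ-groups is again a GVZ-group. By Theorem~\ref{nilp2}, the hypothesis gives a decomposition $G = S\times Q$ in which $Q$ is a GVZ-group, $S$ is a Sylow $p$-subgroup of $G$ with $S\cap N>1$, and every character in $\irr{S\mid S\cap N}$ has central type. Since $G$ is nilpotent by Theorem~\ref{nilp}, so is $N$, and therefore $N = (N\cap S)\times(N\cap Q)$; because $N$ is not a $p$-group, the $p'$-factor $N\cap Q$ is nontrivial.

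The substantive step is to promote ``every character in $\irr{S\mid S\cap N}$ has central type'' to ``$S$ is a GVZ-group.'' Fix an arbitrary $\vartheta\in\irr S$ and, using $N\cap Q>1$, choose $\gamma\in\irr{Q\mid N\cap Q}$. Then $\vartheta\times\gamma\in\irr G$ has kernel $\ker(\vartheta)\times\ker(\gamma)$, and since $N\cap Q\nleq\ker(\gamma)$ we get $N\nleq\ker(\vartheta\times\gamma)$, so $\vartheta\times\gamma\in\irr{G\mid N}$ and hence has central type. Using $Z(\vartheta\times\gamma)=Z(\vartheta)\times Z(\gamma)$ (as noted in the proof of Theorem~\ref{nilp2}), the central-type condition $(\vartheta\times\gamma)(1)^2=\norm{G:Z(\vartheta\times\gamma)}$ becomes $\vartheta(1)^2\gamma(1)^2=\norm{S:Z(\vartheta)}\,\norm{Q:Z(\gamma)}$. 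Comparing $p$-parts --- the factors involving $\vartheta$ and $S$ are $p$-numbers, those involving $\gamma$ and $Q$ are $p'$-numbers --- yields $\vartheta(1)^2=\norm{S:Z(\vartheta)}$, which says precisely that $\vartheta$ vanishes off $Z(\vartheta)$, i.e. that $\vartheta$ has central type. As $\vartheta\in\irr S$ was arbitrary, $S$ is a GVZ-group.

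Finally, $G=S\times Q$ is a direct product of the GVZ-groups $S$ and $Q$, hence is itself a GVZ-group: every irreducible character of $G$ has the form $\alpha\times\beta$ with $\alpha\in\irr S$ and $\beta\in\irr Q$, its center is $Z(\alpha)\times Z(\beta)$, and $(\alpha\times\beta)(s,q)=\alpha(s)\beta(q)$ vanishes off $Z(\alpha)\times Z(\beta)$ since $\alpha$ and $\beta$ individually vanish off their centers. I do not foresee a genuine obstacle; the only points needing a little care are the $p$-part bookkeeping --- which merely repeats the computation already carried out inside the proof of Theorem~\ref{nilp2} --- and the elementary splitting $N=(N\cap S)\times(N\cap Q)$ coming from the coprimality of $\norm{S}$ and $\norm{Q}$.
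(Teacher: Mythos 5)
Your proof is correct, but it follows a different route from the paper's. The paper's argument is computation-free: since $N$ is not a $p$-group, it has Sylow subgroups $N_1$ and $N_2$ for two distinct primes $p_1$ and $p_2$; these are normal in $G$ (because $G$ is nilpotent by Theorem~\ref{nilp}) and each satisfies the hypothesis of Theorem~\ref{nilp2} since $\irr{G\mid N_i}\subseteq\irr{G\mid N}$. Applying Theorem~\ref{nilp2} with $N_1$ shows that every Sylow subgroup of $G$ other than the Sylow $p_1$-subgroup is a GVZ-group, and applying it with $N_2$ takes care of the Sylow $p_1$-subgroup, so the two applications cover for each other's exceptional factor. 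You instead apply Theorem~\ref{nilp2} once, with $N$ itself, and then dispose of the exceptional Sylow factor $S$ by hand: pairing an arbitrary $\vartheta\in\irr{S}$ with a $\gamma\in\irr{Q\mid N\cap Q}$ (which exists precisely because $N$ is not a $p$-group) and comparing $p$-parts in $\vartheta(1)^2\gamma(1)^2=\norm{S:Z(\vartheta)}\,\norm{Q:Z(\gamma)}$. This in effect re-runs the computation already carried out inside the proof of Theorem~\ref{nilp2}, but it is sound, and it has the minor advantage of making explicit exactly where the non-$p$-group hypothesis enters. Both arguments then finish with the same observation that a direct product of GVZ-groups is a GVZ-group.
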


\begin{proof}
We first appeal to Theorem \ref{nilp} to see that $G$ is nilpotent.  Since $N$ is not a $p$-group for any prime $p$,  we can find distinct primes $p_1$ and $p_2$ that divide $|N|$.  For $i = 1, 2$, take $N_i$ to be the Sylow $p_i$-subgroup of $N$.  Since $G$ is nilpotent, $N$ is nilpotent; so $N_i$ is normal and thus characteristic in $N$ and hence, they are normal in $G$.  Using Theorem \ref{nilp2} with $N_i$ as the normal subgroup, we see that all the Sylow subgroups of $G$ are GVZ-groups.  (We use $N_1$ to see that Sylow $p_2$-subgroup is a GVZ-group and $N_2$ for the Sylow $p_1$-subgroup.)  Now, we know that $G$ is nilpotent and that all the Sylow subgroups are GVZ-groups, and so, we can conclude that $G$ is a GVZ-group.
\end{proof}

We now present the proof of Theorem~\ref{main part 3}, which is an adaptation of the usual Taketa argument. Our proof is nearly identical to our proof of Theorem B of \cite{ourpre} with just a few subtle differences.  When $G$ is a nilpotent group, we write $c(G)$ for the nilpotence class of $G$, and we set $G_1 = G$ and $G_{i+1} = [G_i,G]$ for $i \ge 1$ for the terms of the lower central series for $G$.  Following \cite{Knutson}, we set $\cd{G\mid N}=\{\chi(1) \mid \chi\in\irr{G\mid N}\}$. Using this notation, we restate Theorem \ref{main part 3} in a slightly different form.

\begin{thm}
Let $N$ be a nontrivial, normal subgroup of $G$.  If all of the characters in $\irr {G \mid N}$ have central type, then $c (N) \le |\cd {G \mid N}|$.
\end{thm}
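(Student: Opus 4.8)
\emph{Proof proposal.} Write $d=|\cd{G\mid N}|$. Since $c(N)\le d$ is equivalent to $\gamma_{d+1}(N)=1$, and since by Lemma~\ref{trivial int} the kernels of the characters in $\irr{G\mid N}$ meet trivially, it suffices to prove that $\gamma_{d+1}(N)\le\ker\chi$ for every $\chi\in\irr{G\mid N}$. I would argue by induction on $|G|$. A reduction using Theorems~\ref{nilp} and~\ref{nilp2} together with Theorem~B of \cite{ourpre} lets us assume from the outset that $G$ is a $p$-group: when $N$ is not a $p$-group the group $G$ is a GVZ-group and one passes to its Sylow subgroups, and when $N$ is a $p$-group one passes to the Sylow $p$-subgroup of $G$. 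The case in which $N$ is abelian is trivial, since then $c(N)=1\le|\cd{G\mid N}|$.

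Fix $\chi\in\irr{G\mid N}$ and let $\chi(1)=f_k$ be the $k$-th smallest element of $\cd{G\mid N}$. Put $\overline{G}=G/\ker\chi$ and let $\overline{N}$ be the image of $N$; then $\overline{\chi}$ is a faithful character of central type, so $\overline{G}$ is a group of central type with $|\overline{G}:Z(\overline{G})|=f_k^2$. The first key step is a degree bound: every central-type character $\eta$ of $\overline{G}$ satisfies $\eta(1)\le f_k$, because $Z(\overline{G})=Z(\overline{\chi})\le Z(\eta)$ forces $\eta(1)^2=|\overline{G}:Z(\eta)|\le|\overline{G}:Z(\overline{G})|=f_k^2$. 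Every member of $\irr{\overline{G}\mid\overline{N}}$ lifts to a member of $\irr{G\mid N}$ and hence is of central type, so $\cd{\overline{G}\mid\overline{N}}\subseteq\{f_1,\dots,f_k\}$ and $|\cd{\overline{G}\mid\overline{N}}|\le k$. If $\ker\chi\ne 1$, the inductive hypothesis applied to $(\overline{G},\overline{N})$ gives $c(\overline{N})\le k$, that is, $\gamma_{k+1}(N)\le\ker\chi$, and therefore $\gamma_{d+1}(N)\le\gamma_{k+1}(N)\le\ker\chi$, as required.

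It remains to treat the characters $\chi\in\irr{G\mid N}$ with $\ker\chi=1$; if there are none we are done, so suppose $G$ has a faithful character $\chi_0\in\irr{G\mid N}$. Then $\chi_0$ has central type, $Z(G)=Z(\chi_0)$ is cyclic, and by the degree bound (with $\overline{G}=G$) every member of $\irr{G\mid N}$ has degree at most $\chi_0(1)$, so $\chi_0(1)=f_d$. Since $G$ is a $p$-group, $\Omega:=\Omega_1(Z(G))$ has order $p$, and as $1\ne N\cap Z(G)\le Z(G)$ is cyclic we get $\Omega\le N$; we may assume $N\ne\Omega$, else $c(N)=1$. The pair $(G/\Omega,N/\Omega)$ satisfies the hypotheses and, being a proper quotient, $c(N/\Omega)\le|\cd{G/\Omega\mid N/\Omega}|\le d$ by induction, so $\gamma_{d+1}(N)\le\Omega$. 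If $|\cd{G/\Omega\mid N/\Omega}|<d$, then in fact $\gamma_d(N)\le\Omega\le Z(G)$, whence $\gamma_{d+1}(N)=[\gamma_d(N),N]=1$ and we are done.

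The one remaining, extremal configuration — $G$ has a faithful central-type character $\chi_0$ of degree $f_d$, the set $\cd{G/\Omega\mid N/\Omega}$ coincides with $\cd{G\mid N}$, and $\gamma_{d+1}(N)=\Omega$ (so $c(N)=d+1$) — is the step I expect to be the main obstacle, and it is exactly where the argument diverges from the proof of Theorem~B of \cite{ourpre} (there every irreducible character has central type and no such configuration survives). The plan is to rule it out directly from the structure of central-type $p$-groups: $\chi_0$ is fully ramified over the cyclic subgroup $Z(G)$, its restriction to $N$ is a faithful character of $N$ vanishing off $N\cap Z(G)$, and $\Omega=\gamma_{d+1}(N)\le N\cap Z(G)$; analyzing how a character that realizes the degree $f_d$ on $G/\Omega$ must interact with $N$ should force a contradiction. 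Carrying out this final step rigorously is the delicate part of the proof.
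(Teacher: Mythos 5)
Your reduction to showing $\gamma_{d+1}(N)\le\ker\chi$ for each $\chi\in\irr{G\mid N}$, and your handling of the non-faithful characters by induction on $|G|$, match the paper's argument (the paper's own Taketa-style proof does exactly this, only using the weaker containment $\cd{G/\ker\chi\mid N\ker\chi/\ker\chi}\subseteq\cd{G\mid N}$ rather than your sharper bound by $\{f_1,\dots,f_k\}$). But the proposal has a genuine gap exactly where you flag it: in the faithful case you quotient by $\Omega=\Omega_1(Z(G))$, and since a character of $G/\Omega$ lying over $N\Omega/\Omega$ can perfectly well still have the top degree $f_d$ (take $Z(G)$ cyclic of order $p^2$ and $\psi$ with $\ker\psi=\Omega$ and $Z(\psi)=Z(G)$), the degree count need not drop, and you are left with the unresolved possibility $\gamma_{d+1}(N)=\Omega\ne 1$. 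The concluding ``extremal configuration'' is not a corner case to be polished away later; it is the entire content of the inductive step, and no argument is given for it.

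The fix is to quotient by the full center rather than by $\Omega$, which is what the paper does. If $\chi_0\in\irr{G\mid N}$ is faithful of central type, then $Z(G)=Z(\chi_0)$ and $\chi_0(1)^2=|G:Z(G)|=f_d^2$. Now take any $\psi\in\irr{G/Z(G)\mid NZ(G)/Z(G)}$; then $\psi\in\irr{G\mid N}$ has central type and $Z(G)\le\ker\psi\le Z(\psi)$. One cannot have $Z(\psi)=Z(G)$: that would force $\ker\psi=Z(\psi)$, i.e.\ $Z(G/Z(G))=1$, contradicting nilpotence of $G$ (Theorem~\ref{nilp}); note $G$ is nonabelian since $N$ may be assumed nonabelian. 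Hence $Z(\psi)>Z(G)$ and $\psi(1)^2=|G:Z(\psi)|<|G:Z(G)|=f_d^2$, so $f_d\notin\cd{G/Z(G)\mid NZ(G)/Z(G)}$ and that set has at most $d-1$ elements. Induction then gives $\gamma_d(N)\le\gamma_d(NZ(G))\le Z(G)$, whence $\gamma_{d+1}(N)\le[Z(G),N]=1$, closing the faithful case with no residual configuration to analyze. (Your preliminary reduction to $p$-groups is harmless but unnecessary once this is in place.)
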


\begin{proof}
We work by induction on $|G|$.  If $N$ is abelian, then the result is trivial.  Thus, we may assume that $N$ is not abelian, which implies that $G$ is not abelian.  Let $d_1 < \dots < d_n$ be the distinct degrees in ${\rm cd} (G \mid N)$.  Consider a character $\chi \in \irr {G \mid N}$.  If $\ker (\chi) > 1$, then $|G/\ker (\chi)| < |G|$, and ${\rm cd} (G/\ker (\chi) \mid N\ker (\chi)/\ker (\chi)) \subseteq {\rm cd} (G \mid N)$.  By induction, we have $N_n \le \ker (\chi)$.  Thus, if $G$ does not have a faithful character in $\irr {G \mid N}$, then $N_n \le \cap_{\chi \in \irr {G \mid N}} \ker (\chi) = 1$ by Lemma \ref{trivial int}.  Therefore, we may assume that there exists $\chi \in \irr {G \mid N}$ with $\ker (\chi) = 1$.  This implies that $Z (\chi) = Z(G)$.  We have $d_i^2 \le |G:Z(G)| =\chi (1)^2$ for every integer $i$ with $1 \le i \le n$.  Thus, $\chi (1) = d_n$.  Notice that if $a \in {\rm cd} (G/Z(G) \mid NZ(G)/Z(G))$, then $a^2 < |G:Z(G)| = d_n^2$.  It follows that $|{\rm cd} (G/Z(G) \mid NZ(G)/Z(G))| \le n -1$.  By the inductive hypothesis, we have that $N_{n-1} \le Z(G)$.  It follows that $N_{n-1} \le Z(G) \cap N \le Z(N)$.  This implies that $c (N) \le n$ as desired.
\end{proof}

In Theorem 6.3 of \cite{gagola1}, Gagola has proven that if $Q$ is any $p$-group, then there exists a $p$-group $P$ so that $Q$ is isomorphic to a subgroup of $P/Z(P)$ and every character in $\irr {P \mid Z(P)}$ is fully ramified with respect to $Z(P)$.  Notice that this will imply that every irreducible character in $\irr {P \mid Z(P)}$ will have central type and $\cd {P \mid Z(P)} = \{ |P:Z(P)|^{1/2} \}$, so $|\cd { P \mid Z(P)}| = 1$.  Since $Q$ is abitrary, this implies that we have no hope of bounding $c(G)$ in terms of $|\cd {G \mid N}|$ when every character in $\irr {G \mid N}$ has central type.

Even if we assume that $G$ is a GVZ-group, we cannot hope to bound the nilpotence class of $G$ in terms of $|\cd {G \mid N}|$ for a subgroup $N$.  To see this, consider the groups found in Example 3 in \cite{ML19gvz}.  The groups in that example are all GVZ-groups and all satisfy that $|\cd {G \mid Z(G)}| = 1$, but can be chosen to have whatever nilpotence class desired.

\section{The subgroup $R(G)$}

Suppose that $G$ is a partial GVZ-group with respect to each of the normal subgroups $N$ and $M$. It is not difficult to see that $\irr {G \mid NM} = \irr {G \mid N} \cup \irr {G \mid M}$, so $G$ is a partial GVZ-group with respect to $NM$. In particular, there is a unique largest normal subgroup $R(G)$ such that every member of $\irr {G \mid R(G)}$ has central type. In this section, we discuss some properties of this subgroup.

\begin{lem}
Let $G$ be a group.  The following statements are true:
\begin{enumerate}[label={\rm(\arabic*)}]
\item $R(G) = G$ if and only if $G$ is a GVZ-group;
\item If $R(G) > 1$, then $G = S \times Q$, where $Q$ is a GVZ-group and $S$ is either trivial or a Sylow $p$-subgroup satisfying $1 < R(S) = R(G) < S$ for some prime $p$;
\item $R(G/R(G)) = 1$.
\end{enumerate}
\end{lem}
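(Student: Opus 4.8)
The plan is to obtain all three parts from the defining property of $R(G)$ (the unique largest normal subgroup $N$ such that every member of $\irr{G\mid N}$ has central type) together with Theorems~\ref{nilp} and~\ref{nilp2}. For~(1), observe that $\irr{G\mid G}$ is exactly the set of nonprincipal irreducible characters of $G$, and that the principal character trivially has central type; hence $G$ itself is a normal subgroup satisfying the defining property precisely when $G$ is a GVZ-group. Since $R(G)\le G$ always and $R(G)$ is the largest such subgroup, this gives $R(G)=G$ if and only if $G$ is a GVZ-group.

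For~(2), I would first handle the GVZ case: if $G$ is a GVZ-group then $R(G)=G$ by~(1), and one takes $S=1$ and $Q=G$. Otherwise, since $R(G)>1$, $G$ is a partial GVZ-group with respect to $R(G)$, so $G$ is nilpotent by Theorem~\ref{nilp} and Theorem~\ref{nilp2} gives a decomposition $G=S\times Q$ with $Q$ a GVZ-group, $S$ a Sylow $p$-subgroup satisfying $S\cap R(G)>1$, and every character in $\irr{S\mid S\cap R(G)}$ of central type. Here $S$ cannot be a GVZ-group, for otherwise $G=S\times Q$ would be a direct product of prime-power GVZ-groups for distinct primes and hence a GVZ-group; in particular $R(S)<S$. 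Writing an arbitrary $\chi\in\irr G$ as $\chi=\vartheta\times\gamma$ with $\vartheta\in\irr S$ and $\gamma\in\irr Q$, one has $\ker\chi=\ker\vartheta\times\ker\gamma$ and $Z(\chi)=Z(\vartheta)\times Z(\gamma)$; comparing the $p$-part and the $p'$-part of the degree relation characterizing central type, exactly as in the proof of Theorem~\ref{nilp2}, shows that $\chi$ has central type if and only if both $\vartheta$ and $\gamma$ do, and $\gamma$ always does since $Q$ is a GVZ-group.

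Since $G$ is nilpotent, $R(G)=(R(G)\cap S)\times(R(G)\cap Q)$. If $R(G)\cap Q>1$, then choosing $\gamma\in\irr{Q\mid R(G)\cap Q}$ puts $\vartheta\times\gamma$ in $\irr{G\mid R(G)}$ for every $\vartheta\in\irr S$, forcing every such $\vartheta$, and hence $S$, to have central type---i.e.\ $S$ to be a GVZ-group, a contradiction; so $R(G)\le S$. Now each $\vartheta\in\irr{S\mid R(G)}$ lifts to $\vartheta\times 1_Q\in\irr{G\mid R(G)}$, which has central type, so $\vartheta$ does; thus $R(G)\le R(S)$. Conversely, every $\chi=\vartheta\times\gamma\in\irr{G\mid R(S)}$ has $R(S)\not\le\ker\vartheta$, so $\vartheta\in\irr{S\mid R(S)}$ has central type and therefore so does $\chi$; thus $R(S)\le R(G)$. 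Hence $R(S)=R(G)$, with $1<R(G)$ by hypothesis and $R(S)<S$ as already noted.

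For~(3), suppose toward a contradiction that $R(G/R(G))=\bar M/R(G)$ for a normal subgroup $\bar M$ of $G$ with $R(G)<\bar M$. I claim every $\chi\in\irr{G\mid\bar M}$ has central type, which contradicts the maximality of $R(G)$. If $R(G)\not\le\ker\chi$, then $\chi\in\irr{G\mid R(G)}$ has central type by definition of $R(G)$. If $R(G)\le\ker\chi$, then $\chi$ factors through $G/R(G)$ as a character $\bar\chi$ with $\bar M/R(G)\not\le\ker\bar\chi$, so $\bar\chi\in\irr{G/R(G)\mid R(G/R(G))}$ has central type, and since $\chi$ and $\bar\chi$ have the same faithful quotient, $\chi$ has central type too. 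The step I expect to be the main obstacle is the core of~(2): showing that the single ``bad'' Sylow factor $S$ absorbs all of $R(G)$ and that $R(G)=R(S)$. This rests on central type being multiplicative across coprime direct factors, which I would extract from the degree and prime-part comparison already carried out in the proof of Theorem~\ref{nilp2} rather than reproving it.
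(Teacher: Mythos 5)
Your proposal is correct and follows essentially the same route as the paper: part (1) from the definition, part (2) from the direct-product decomposition of Theorem~\ref{nilp2} (equivalently Theorems~\ref{main part 2} and~\ref{main partial}), and part (3) by the same two-case analysis on whether $R(G)\le\ker(\chi)$. The only difference is one of detail: the paper declares (2) immediate from those theorems, whereas you explicitly carry out the (correct) verification that $R(G)\le S$ and $R(S)=R(G)$ via the coprime-factor comparison of degrees and centers.
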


\begin{proof}
Statement (1) is obvious from the definition, and statement (2) follows immediately from Theorem~\ref{main part 2} and Theorem~\ref{main partial}. So we only show (3). If $R(G) = 1$, then (3) is clear. So assume that $R(G) > 1$, and write $L/R(G) = R(G/R(G))$. Let $\chi \in \irr {G \mid L}$. If $R(G) \le \ker(\chi)$, then $\chi$ has central type since $L/R(G) = R (G/R(G))$. If $R(G) \nleq \ker(\chi)$, then $\chi$ has central type by the definition of $R(G)$. Thus $L\le R(G)$, from which it follows that $L = R(G)$.
\end{proof}

We next look at $R(G)$ when $G$ is a direct product.

\begin{lem}\label{direct products}
Let $M$ and $N$ be groups. Assume that $M$ is not a GVZ-group. The following statements are true:
\begin{enumerate}[label={\rm(\arabic*)}]
\item If $N$ is not a GVZ-group, then $R (M \times N) = 1$;
\item If $N$ is a GVZ-group, then $R (M \times N) = R(M)$.
\end{enumerate}
\end{lem}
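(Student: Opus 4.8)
The plan is to argue directly from the character theory of direct products together with the maximality in the definition of $R(\,\cdot\,)$. Write $G=M\times N$ and $R=R(G)$. Recall that $\irr G=\{\alpha\times\beta:\alpha\in\irr M,\ \beta\in\irr N\}$, that $Z(\alpha\times\beta)=Z(\alpha)\times Z(\beta)$, and hence — using $\psi(1)^2\le|H:Z(\psi)|$ for every irreducible $\psi$ — that $\alpha\times\beta$ has central type if and only if both $\alpha$ and $\beta$ do. I will use two kernel facts: $\ker(\alpha\times 1_N)=\ker\alpha\times N$, so that $R\le\ker(\alpha\times 1_N)$ precisely when $\pi_M(R)\le\ker\alpha$; and, when $\pi_M(R)\le\ker\alpha$, that $R\le\ker(\alpha\times\beta)$ if and only if $\pi_N(R)\le\ker\beta$ for every $\beta\in\irr N$ (check both directions element-wise, noting $\alpha(m)=\alpha(1)$ for $m\in\pi_M(R)$). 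Here $\pi_M,\pi_N$ denote the projections; $\pi_M(R)\trianglelefteq M$ and $\pi_N(R)\trianglelefteq N$ automatically.

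The first step is to show $\pi_M(R)\le R(M)$ and $\pi_N(R)\le R(N)$: if $\alpha\in\irr{M\mid\pi_M(R)}$ then $R\not\le\ker(\alpha\times 1_N)$, so $\alpha\times 1_N\in\irr{G\mid R}$ has central type, hence $\alpha$ has central type; maximality of $R(M)$ gives $\pi_M(R)\le R(M)$, and symmetrically for $N$. The crucial step is: \emph{if $M$ is not a GVZ-group, then $\pi_N(R)=1$.} Suppose $\pi_N(R)>1$ and pick $\beta_0\in\irr N$ with $\pi_N(R)\not\le\ker\beta_0$. Since $M$ is not a GVZ-group there is a non-central-type $\alpha_1\in\irr M$; then $R(M)\le\ker\alpha_1$ by definition of $R(M)$, so $\pi_M(R)\le R(M)\le\ker\alpha_1$. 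By the second kernel fact, $R\not\le\ker(\alpha_1\times\beta_0)$, so $\alpha_1\times\beta_0\in\irr{G\mid R}$ and therefore has central type — forcing $\alpha_1$ to have central type, a contradiction. Hence $\pi_N(R)=1$, i.e.\ $R\le M\times 1$, so $R=\pi_M(R)\times 1$.

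Both parts now follow. For (1), $N$ is also not a GVZ-group, so by the symmetric statement $\pi_M(R)=1$ as well, whence $R\le(M\times 1)\cap(1\times N)=1$. For (2), we have $R=\pi_M(R)\times 1$ with $\pi_M(R)\le R(M)$, and it remains to prove the reverse inclusion $R(M)\times 1\le R$. If $\chi=\alpha\times\beta\in\irr{G\mid R(M)\times 1}$, then (since $\ker(\alpha\times\beta)\cap(M\times 1)=\ker\alpha\times 1$) we get $R(M)\not\le\ker\alpha$, so $\alpha$ has central type by definition of $R(M)$, while $\beta$ has central type because $N$ is a GVZ-group; thus $\chi$ has central type. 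So $R(M)\times 1$ is a normal subgroup every member of whose $\irr{G\mid\,\cdot\,}$ set has central type, and maximality of $R(G)$ gives $R(M)\times 1\le R$; combined with $\pi_M(R)\le R(M)$ this forces $R=R(M)\times 1$, which is the assertion (identifying $R(M)$ with $R(M)\times 1$).

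The main obstacle is that $\ker(\alpha\times\beta)$ is genuinely larger than $\ker\alpha\times\ker\beta$ in general — already the product of the two nonprincipal characters of $C_2\times C_2$ shows this — so one cannot simply argue that $\irr{G\mid R}$ is determined by the pair $(\pi_M(R),\pi_N(R))$ and read off $R(G)$ coordinatewise. The device that makes everything go through is to compute kernels only in the two tame cases $\beta=1_N$ and $\pi_M(R)\le\ker\alpha$; Step~1 is exactly what guarantees that the non-central-type character $\alpha_1$ produced in the crucial step lies in the latter case.
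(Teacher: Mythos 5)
Your proof is correct, and it takes a genuinely different route from the paper's. The paper works with the intersections $R\cap M$ and $R\cap N$ and with element-wise vanishing: for (1) it argues by contrapositive, first showing $R\le N$ by an element-chasing argument with $1_M\times\nu$, then producing for each nonlinear $\nu\in\irr{N}$ a character $\mu\times\nu\in\irr{M\times N\mid R}$ and reading off $\nu(n)=0$ from $0=\mu(m)\nu(n)$ with $m\in Z(\mu)$; for (2) it shows $R\cap N=1$, then $R\cap M>1$, then $R\le M$, via a delicate analysis of elements $mn\in R$ with $m\in Z(M)$ and $n\in Z(N)$. You instead replace the intersections by the projections $\pi_M(R),\pi_N(R)$ and exploit the multiplicativity of the central-type condition across direct factors (via $\chi(1)^2\le\norm{G:Z(\chi)}$ with equality exactly for central type --- the same characterization the paper uses in the proof of Theorem~\ref{nilp2}, though not in this lemma's proof), which yields the single clean statement that $\pi_N(R)=1$ whenever $M$ is not a GVZ-group; both parts then follow by symmetry and the maximality built into the definition of $R(\cdot)$. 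What this buys is that you never need $R$ to split as $(R\cap M)\times(R\cap N)$ --- exactly the point where the paper's proof of (2) becomes laborious --- and your explicit restriction to the two tame kernel computations (the case $\beta=1_N$, and the case $\pi_M(R)\le\ker\alpha$) correctly handles the pitfall that $\ker(\alpha\times\beta)$ can properly contain $\ker\alpha\times\ker\beta$. I checked each of your kernel facts, the normality of the projections, and the degree--index computation; there are no gaps.
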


\begin{proof}
We prove the contrapositive of statement (1). Write $R = R (M \times N)$. Assume that $R > 1$. We want to show that that $N$ is a GVZ-group.  Since the result is immediate if $N$ is abelian, we may assume that $N$ is nonabelian. Fix the character $\nu \in \irr{N}$, so that $\nu$ is nonlinear. We claim that there exists a character $\mu \in \irr {M}$ so that $\chi = \mu \times \nu \in \irr {M \times N \mid R}$. To see this, assume on the contrary that $R \le \ker(\mu \times \nu)$ for every character $\mu \in \irr {M}$. Consider an element $mn\in R$, where $m\in M$ and $n\in N$. Then $mn \in \ker(1_M \times \nu) = M \times \ker (\nu)$, so $n \in \ker(\nu)$. So $\mu(m) \nu(n) = \mu(m)\nu(1) = \mu(1)\nu(1)$ for every character  $\mu \in \irr{M}$, from which it follows that $m \in \bigcap_{\mu \in \irr{M}} \ker (\mu) = 1$. Thus we see that $R \le \ker(\nu) \le N$. 

Since $M$ is not a GVZ-group, we see that $M$ is not abelian. In particular, $\irr {M \mid M'}$ is nonempty. Fix a character $\mu \in \irr {M \mid M'}$ and consider a character $\theta \in \irr {N \mid R}$. We obtain $\chi = \mu \times \theta \in \irr {M \times N \mid R}$.  Fix an element $m \in M \setminus Z(\mu)$.  This implies $m \notin Z(\chi)$, so $0 = \chi(m) = \mu(m)\theta(1)$ and it follows that $\mu (m) = 0$.  We deduce that $\mu$ has central type, and since $\mu$ was arbitrary, we conclude that $M$ is a GVZ-group, which is a contradiction. Thus, we may find a character $\mu \in \irr {M}$ so that $\chi = \mu \times \nu \in \irr {M \times N \mid R}$, as claimed. 

Consider elements $m \in Z(\mu)$ and $n \in N \setminus Z(\nu)$. Since $Z (\chi) = Z (\mu) \times Z (\nu)$, we determine that $mn \notin Z(\chi)$. Since $\chi$ has central type, we calculate $0 = \chi (mn) = \mu(m)\nu(n)$. Because $\mu(m)\ne 0$, we must have $\nu(n) = 0$. Thus, $\nu$ has central type and since $\nu \in \irr {N \mid N'}$ was chosen arbitrarily, $N$ is a GVZ-group. 
	
Now we show conclusion (2). Assume that $N$ is a GVZ-group. Let $R = R (M \times N)$. We prove that $R \cap N$ must be trivial.  Suppose $R \cap N > 1$.  We claim that this will imply that $M$ must be a GVZ-group which is contradiction.  As we have seen, if $M$ is abelian, then $M$ will be a GVZ-group, so we assume that $M$ is not abelian.  Since $R \cap N > 1$, we can find a character $1 \ne \gamma \in \irr {R \mid N}$.  Consider characters $\nu \in \irr {N \mid \gamma}$ and $\mu \in \irr {M \mid M'}$.  Since $\gamma \ne 1$, we see that $\chi = \nu \times \mu \in \irr {N \times M \mid R(G)}$.   for if not the above argument shows that  We claim that $R\cap M$ is nontrivial. Assume, on the contrary, that $R\cap M$ is trivial. Let $m\in M$, $n\in N$ such that $mn\in R$. Since $R\lhd G$, it follows that $[m,M][n,N]\subseteq (M\cap R)(N\cap R)=1$. So $m\in Z(M)$ and $n\in Z(N)$. In particular $mn\in Z(G)$ and $\inner{mn}$ is a normal subgroup of $G$. Since $mn\in R$, $\chi\in\irr{M\times N\mid\inner{mn}}$ has central type. Let $\mu\in\irr{M\mid \inner{m}}$. Then $mn\notin\ker(\mu\times 1_N)$, so $\mu\times 1_N$ has central type. But this implies that $\mu$ has central type. So $m\in R(M)$. Since $m\in Z(M)\le Z(G)$, $\inner{m}$ is also a normal subgroup of $G$. Since $N$ is a GVZ-group and $\inner{m}\le R(M)$, every member of $\irr{M\times N\mid\inner{m}}$ has central type, which contradicts the fact that $R\cap M=1$. So $R\cap M > 1$, as claimed. Since $R\cap N = 1$, this implies that $R\le M$. So $\mu\times\nu\in\irr{M\times N\mid R}$ if and only if $\mu\in\irr{M\mid R}$. Since $N$ is a GVZ-group, $\mu\times\nu$ has central type if and only if $\mu$ does. From this it follows that $R(M) \le R$, since every member of $\irr{M\times N\mid R(M)}$ must have central type, and that $R\le R(M)$, since every member of $\irr{M\mid R}$ must have central type. Thus $R=R(M)$, as desired.
\end{proof}

Recall that the {\it vanishing-off subgroup} $V(\chi)$ of a character $\chi$ is the smallest normal subgroup $V$ such that $\chi$ vanishes on $G\setminus V$ (e.g. see \cite[Chapter 12]{MI76}). It is not difficult to see that $V(\chi)=\inner{g\in G\mid \chi(g)\ne 0}$ and that $Z(\chi)\le V(\chi)$. Since central type characters vanish off their centers, these are exactly the characters whose vanishing-off subgroup coincide with their center.

\begin{lem}\label{R intersection}
Let $\mathcal{V}=\{\chi\in\irr{G}\mid V(\chi) > Z(\chi)\}$. Then $R(G)=\bigcap_{\chi\in \mathcal{V}}\ker(\chi)$.
\end{lem}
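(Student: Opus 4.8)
The plan is to first observe that $\mathcal{V}$ is precisely the set of irreducible characters of $G$ that do \emph{not} have central type. Since $Z(\chi)\le V(\chi)$ for every $\chi\in\irr{G}$, and a central type character is exactly one that vanishes off its center, i.e.\ one with $V(\chi)=Z(\chi)$, we have $\chi\in\mathcal{V}$ if and only if $\chi$ fails to have central type. Write $D=\bigcap_{\chi\in\mathcal{V}}\ker(\chi)$; this is a normal subgroup of $G$ (an intersection of normal subgroups, interpreted as $G$ itself when $\mathcal{V}=\emptyset$, i.e.\ when $G$ is a GVZ-group). The goal is then to prove $R(G)=D$ by establishing the two inclusions, each of which is a short unwinding of the defining property of $R(G)$ as the unique largest normal subgroup with the property that every member of $\irr{G\mid R(G)}$ has central type.

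For the inclusion $R(G)\le D$: I would fix an arbitrary $\chi\in\mathcal{V}$. Then $\chi$ does not have central type, so $\chi$ cannot lie in $\irr{G\mid R(G)}$; that is, $R(G)\le\ker(\chi)$. Since $\chi\in\mathcal{V}$ was arbitrary, this gives $R(G)\le D$.

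For the reverse inclusion $D\le R(G)$: I would check that every character in $\irr{G\mid D}$ has central type, and then invoke the maximality of $R(G)$. Indeed, if some $\chi\in\irr{G\mid D}$ failed to have central type, then $\chi\in\mathcal{V}$, whence $D\le\ker(\chi)$, contradicting $D\nleq\ker(\chi)$. Hence every member of $\irr{G\mid D}$ has central type, and since $R(G)$ is the largest normal subgroup enjoying this property, $D\le R(G)$. Combining the two inclusions yields $R(G)=D$.

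There is no substantive obstacle here: the argument is a direct definition-chase once the right reformulation is in hand. The only point that needs a moment of care is the identification of $\mathcal{V}$ with the complement of the set of central type characters (using $Z(\chi)\le V(\chi)$), together with the bookkeeping of the degenerate case $\mathcal{V}=\emptyset$, in which both sides equal $G$ and the statement is consistent with part (1) of the preceding lemma.
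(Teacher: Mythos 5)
Your proposal is correct and follows essentially the same route as the paper: both arguments rest on the identification of $\mathcal{V}$ with the set of characters failing to have central type (via $Z(\chi)\le V(\chi)$), and then establish the two inclusions by the same definition-chase using the maximality of $R(G)$. No further comment is needed.
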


\begin{proof}
Let $W=\bigcap_{\chi\in \mathcal{V}}\ker(\chi)$. If $\chi\in\irr{G\mid W}$, then $V(\chi) = Z(\chi)$ and therefore has central type. So $W\le R(G)$. 
	
Since every member of $\irr{G\mid R(G)}$ satisfies $V(\chi) = Z(\chi)$, $\mathcal{V}\subseteq \irr{G/R(G)}$. So $R(G)\le W$ as well, which completes the proof.
\end{proof}

As a corollary, we see that every member of $\irr{G\mid R(G)}$ is nonlinear, unless $G$ is a GVZ-group.

\begin{lem}
If $G$ is not a GVZ-group, then $R(G) < G'$.
\end{lem}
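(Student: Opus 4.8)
The plan is to combine Lemma~\ref{R intersection} --- which identifies $R(G)$ with the intersection of the kernels of the characters in $\mathcal V=\{\chi\in\irr{G} : V(\chi)>Z(\chi)\}$ --- with the elementary fact that twisting a character by a linear character changes neither its vanishing-off subgroup nor its center. Throughout I assume $G$ is not a GVZ-group.

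First I would establish that $R(G)\le G'$. Since $G$ is not a GVZ-group, some $\chi\in\irr{G}$ fails to have central type, i.e. fails to vanish off its center; as $Z(\chi)\le V(\chi)$ always, this means $V(\chi)>Z(\chi)$, so $\mathcal V\neq\emptyset$ and I can fix such a $\chi$. Now let $\lambda\in\irr{G}$ be an arbitrary linear character. Because $\norm{\lambda(g)}=1$ for every $g\in G$, writing out the definitions gives $V(\chi\lambda)=V(\chi)$ and $Z(\chi\lambda)=Z(\chi)$, so $\chi\lambda\in\mathcal V$ as well. Lemma~\ref{R intersection} then places $R(G)$ inside both $\ker(\chi)$ and $\ker(\chi\lambda)$, and evaluating at any $g\in R(G)$ yields $\chi(1)=(\chi\lambda)(g)=\chi(g)\lambda(g)=\chi(1)\lambda(g)$, hence $\lambda(g)=1$. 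Thus $R(G)\le\ker(\lambda)$; since $\lambda$ ranges over all linear characters of $G$, $R(G)$ lies in the intersection of all of their kernels, which is $G'$.

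It then remains to rule out $R(G)=G'$. If that equality held, then by the defining property of $R(G)$ every character in $\irr{G\mid G'}$ would have central type. But the irreducible characters of $G$ outside $\irr{G\mid G'}$ are exactly the linear ones, and every linear $\lambda$ trivially has central type: $G/\ker(\lambda)$ is abelian, hence equals its own center, over which $\lambda$ is vacuously fully ramified. Consequently every member of $\irr{G}$ would have central type, making $G$ a GVZ-group and contradicting the hypothesis. Therefore $R(G)<G'$.

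I do not anticipate a genuine obstacle: the argument is short, and the only point requiring any care is the claim that $\chi\lambda\in\mathcal V$ whenever $\chi\in\mathcal V$, which is immediate from the definitions of $V(\cdot)$ and $Z(\cdot)$. (As a shortcut, the first step can instead be read off from the remark immediately preceding this lemma --- that when $G$ is not a GVZ-group every member of $\irr{G\mid R(G)}$ is nonlinear --- which forces each linear character to contain $R(G)$ in its kernel, again giving $R(G)\le G'$.)
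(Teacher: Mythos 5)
Your proof is correct and follows essentially the same route as the paper's: both invoke Lemma~\ref{R intersection}, twist a character $\chi$ with $V(\chi)>Z(\chi)$ by linear characters to force $R(G)\le\ker(\lambda)$ for all linear $\lambda$, and then rule out $R(G)=G'$ by noting that equality would make every nonlinear character central type. The only difference is that you spell out the routine verifications (that $V(\chi\lambda)=V(\chi)$, $Z(\chi\lambda)=Z(\chi)$, and the kernel computation) which the paper leaves implicit.
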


\begin{proof}
Let $\mathcal{V}=\{\chi\in\irr{G}\mid V(\chi) > Z(\chi)\}$. Since $G$ is not a GVZ-group, $\mathcal{V}$ is not empty. Let $\chi\in\mathcal{V}$. For each $\lambda\in\irr{G\mid G'}$, we have $\chi\lambda\in\mathcal{V}$. Since $R(G)=\bigcap_{\xi\in\mathcal{V}}\ker(\xi)$ by Lemma~\ref{R intersection}, we see that $R(G)\le \ker(\chi\lambda)\cap\ker(\chi) \le \ker(\lambda)$ for each $\lambda\in\irr{G\mid G'}$. Thus $R(G)\le G'$.  Notice that if $R (G) = G'$, then every nonlinear irreducible character of $G$ would be central type and that would imply that $G$ is a GVZ-group, a contradiction.  Thus, we must have $R (G) < G'$.
\end{proof}

We next present a result that allows us to find examples of partial GVZ-groups that are not GVZ-groups. To do this, we consider a characteristic subgroup that is defined in Section 6 of \cite{chain2}. To define it, we begin by defining $U(G)$ to be the largest normal subgroup $U$ of $G$ such that every member of $\irr{G\mid U}$ is fully ramified over $Z(G)$. By defining $U_2(G)/U(G)=U(G/U(G))$, etc, we define an ascending series of subgroups $U_i(G)$ and let $U_\infty(G)$ denote its terminal member.

\begin{lem}\label{Uinfinity}
Let $N=U_\infty(G)$. Then every character in $\irr{G\mid N}$ has central type. In particular, $N\le R(G)$.
\end{lem}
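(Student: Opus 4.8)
The plan is to prove, by induction on $i$, the slightly stronger statement that \emph{every character in $\irr{G\mid U_i(G)}$ has central type}. Applying this with $i$ large enough that $U_i(G)=U_\infty(G)=N$ then gives the first assertion, and $N\le R(G)$ follows immediately from the maximality built into the definition of $R(G)$.

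Before the induction I would isolate two routine preliminaries. First: a character $\psi\in\irr{H}$ that is fully ramified over $Z(H)$ automatically has central type. Indeed, full ramification forces $V(\psi)\le Z(H)$ (since $\psi$ vanishes off $Z(H)$) and $Z(H)\le Z(\psi)$ (since $\psi_{Z(H)}$ is a multiple of a linear character); together with the general inclusion $Z(\psi)\le V(\psi)$ this gives $V(\psi)=Z(\psi)$, and by the characterization of central type characters recorded just before Lemma~\ref{R intersection} this says exactly that $\psi$ has central type. Second: central type is inherited under inflation from, and deflation to, a quotient by a subgroup of the kernel; concretely, if $K\le\ker(\chi)$ and $\widehat\chi$ denotes $\chi$ viewed as a character of $G/K$, then $V(\widehat\chi)=V(\chi)/K$ and $Z(\widehat\chi)=Z(\chi)/K$, so $\widehat\chi$ has central type if and only if $\chi$ does.

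Now the induction. Write $N_i=U_i(G)$, with $N_0=1$. The case $i=1$ is immediate: every member of $\irr{G\mid U(G)}$ is fully ramified over $Z(G)$ by definition, hence has central type by the first preliminary. For the step from $i$ to $i+1$, let $\chi\in\irr{G\mid N_{i+1}}$. If $N_i\nleq\ker(\chi)$, then $\chi\in\irr{G\mid N_i}$ and we are done by the inductive hypothesis. Otherwise $N_i\le\ker(\chi)$, so $\chi$ is the inflation of some $\widehat\chi\in\irr{G/N_i}$; since $N_{i+1}\nleq\ker(\chi)$ we have $\widehat\chi\in\irr{G/N_i\mid N_{i+1}/N_i}$, and $N_{i+1}/N_i=U(G/N_i)$ by the definition of the series $U_i$. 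Hence $\widehat\chi$ is fully ramified over $Z(G/N_i)$, so $\widehat\chi$ has central type by the first preliminary, and therefore so does $\chi$ by the second. This completes the induction.

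Finally, pick $k$ with $U_k(G)=U_\infty(G)=N$; the claim shows every $\chi\in\irr{G\mid N}$ has central type, and since $R(G)$ is the largest normal subgroup with that property we conclude $N\le R(G)$. The only place demanding care is the bookkeeping between the ambient group $G$ and the quotients $G/N_i$ — in particular, that ``fully ramified over the center of $G/N_i$'' really does yield ``central type as a character of $G$'' — and this is precisely what the two preliminary observations handle; I do not expect any genuine obstacle beyond that.
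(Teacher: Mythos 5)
Your proof is correct and takes essentially the same approach as the paper: both peel along the series $U_i(G)$, reducing each character either to one handled at an earlier stage or to one fully ramified over the center of the relevant quotient, and then use that full ramification over the center forces $V=Z$. The only cosmetic difference is that you induct on the series index $i$ while the paper inducts on $|G|$ (passing to $G/U(G)$); your explicit preliminaries on inflation/deflation make the bookkeeping the paper leaves implicit fully precise.
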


\begin{proof}
We work by induction on $\norm{G}$. Let $\chi\in\irr{G\mid N}$. Suppose that $N > U(G)$. By the inductive hypothesis, every character $\chi\in\irr{G/U\mid N/U}$ has central type. Thus we may assume that $\chi\in\irr{G\mid U}$. By definition, $V(\chi) = Z(G)$ and so $\chi$ vanishes on $G\setminus Z(G)$. In particular, $\chi$ has central type. Thus every character in $\irr{G\mid N}$ has central type, as required.
\end{proof}

As a consequence of Lemma~\ref{Uinfinity}, a non GVZ-group $G$ with $U_\infty(G)>1$ provides a nontrivial example of a partial GVZ-group. In particular, if $G$ is not a GVZ-group and $U(G)>1$, then $G$ is a partial GVZ-group with respect to $U(G)$. The smallest $2$-group with this property has order $128$. One may readily check that if $G$ is the group \verb+SmallGroup (128, 71)+ from MAGMA's Small Groups Library, then $1 < U(G) = R(G)$. Moreover, one may even find groups $G$ where $1<U(G) < R(G) < G$. Indeed the group \verb+SmallGroup (256, 6442)+ is an example of such a group.

A group that satisfies $U_\infty(G)=G$ is necessarily a GVZ-group, but the converse is not true (see \cite[Theorem 6.15]{chain2}). Given Lemma~\ref{Uinfinity}, one may be tempted to guess that $R(G) = U_\infty(G)$ if $R(G) > 1$ but $G$ is not a GVZ-group. As an example, if $G$ is the group \verb+SmallGroup(512, 51833)+, then $U(G) < U_\infty(G) = R(G) < G$. However, this is always not the case. For example, we have seen that if $M$ is the group \verb+SmallGroup (128, 71)+, then $1 < U(M) = R(M)$. Let $N$ be a nonabelian GVZ-group and let $G = M \times N$. By Lemma~\ref{direct products}, $1 < R(M) = R(G) < G$. So $G$ is not a GVZ-group. Moreover $U(G) = 1$ \cite[Lemma 6.7]{chain2}. This gives an example of a group $G$ that satisfies $1 = U(G) < R(G) < G$. As another example, if $G$ is the group \verb+SmallGroup (256, 19628)+, then $G$ is not a GVZ-group, $U(G) = 1$, and $\norm{R(G)}=2$.

\section{Central Camina pairs}

We conclude with a special example of a partial GVZ-group. Recall that the subgroup $U(G)$ is defined in \cite{chain2} to be the largest normal subgroup $U$ of $G$ such that every character $\chi\in\irr{G\mid U}$ is fully ramified over $Z(G)$. It is easy to see that $U(G)\le Z(G)$ and that $G$ is a partial GVZ-group with respect to $U(G)$ when $U(G)>1$. In the case that $U(G)=Z(G)$, the pair $(G,Z(G))$ is an example of a {\it Camina pair}.
 
The study of Camina pairs began in \cite{camina} where Camina was looking at conditions that generalize both Frobenius groups and extra-special $p$-groups.  If $G$ is a group and $N$ is a normal subgroup of $G$, we say $(G,N)$ is a {\it Camina pair} if every element $g \in G \setminus N$ is conjugate to the entire coset $gN$.  There are a number of equivalent formulations of this condition.  For this paper, the most relevant is that $(G,N)$ is a Camina pair if and only if each nonprincipal irreducible character of $N$ induces homogeneously to $G$. 
 
Suppose that $(G,N)$ is a Camina pair, and let $\chi\in\irr{G\mid N}$. Then for any irreducible constituent $\psi$ of $\chi_N$, $\psi^G=e\chi$ for some integer $e$ called the {\it ramification index}. The integer $e$ satisfies $1\le e^2\le\norm{G:N}$. The condition $e=1$ is equivalent to $\psi^G\in\irr{G}$, and the condition $e^2=\norm{G:N}$ is equivalent to $\chi$ being fully ramified over $N$. It is well-known that every nonprincipal irreducible character of a subgroup $N$ induces irreducibly to $G$ if and only if $G$ is a Frobenius group and $N=F(G)$. Thus a Frobenius group is a Camina pair $(G,N)$ where the ramification index of each $\chi\in\irr{G\mid N}$ is as small as possible. Observe that Theorem~\ref{central cam} provides a characterization of those Camina pairs $(G,N)$ where the ramification index of each $\chi\in\irr{G\mid N}$ is as large as possible. 
  
Before proving Theorem~\ref{central cam}, we motivate the conclusion a bit. Let $(G,N)$ be a Camina pair. Then it is easy to see that $Z(G) \le N$, and we say $(G,N)$ is a {\it central Camina pair} in the extreme case that $N = Z(G)$.  Central Camina pairs are the focus of \cite{central camina}.  Note that $(G,Z(G))$ is a Camina pair if and only if $U(G)=Z(G)$, which happens if and only if $(G,U(G))$ is a Camina pair. It is known that if $(G,N)$ is a central Camina pair, then $G$ is a $p$-group and every character in $\irr {G \mid N}$ is fully ramified over $N$.  Thus Theorem~\ref{central cam} provides a sort of converse to this statement.

\begin{proof}[Proof of Theorem~\ref{central cam}]
We prove this by induction on $\norm {G}$. First, we show that $G$ must be a $p$-group.  Since every character in $\irr {G \mid N}$ vanishes on $G \setminus N$, we see that $(G,N)$ is a Camina pair, and that $G$ cannot be a Frobenius group with Frobenius kernel $N$. So either $N$ is a $p$-group, or $G/N$ is a $p$-group \cite[Theorem 2.2]{camina}. If $N$ is a $p$-group, then it follows from \cite[Theorem 5.1]{hominduction1} that $I_G (\psi)$ is a $p$-group for each nonprincipal $\psi \in \irr N$. Since every nonprincipal irreducible character of $N$ is fully ramified with respect to $G/N$, this would force $G$ to be a $p$-group as well. So we assume that $G/N$ is a $p$-group. 
	
Since $\norm {G:N}$ divides $\chi (1)$ for every character $\chi \in \irr {G \mid N}$, we know that $p$ divides $\chi (1)$ for every character $\chi \in \irr {G \mid N'}$ if $N' > 1$. It follows from \cite[Theorem D]{Knutson} that $N$ (and hence also $G$) is solvable. Since every character $\psi \in \irr N$ is invariant in $G$, we see that $[N,G] = N'$. Suppose that $1 < M \le N'$ is a normal subgroup of $G$.  Then $(G/M,N/M)$ is a Camina pair by the inductive hypothesis.  Since every nonprincipal character $\psi \in \irr {N/M}$ is fully ramified with respect to $G/N$, it follows from the inductive hypothesis that $N/M = Z (G/M)$. Thus, $N' = [N,G] \le M$, and we deduce that $N'$ is a minimal normal subgroup of $G$. Therefore, $N'$ is an elementary abelian $q$-group for some prime $q$.  Also, note that $N/N'$ is an elementary abelian $p$-group \cite[Theorem 2.2]{IDMac81}. If $q = p$, then we are done, so assume $q \ne p$. Then $N'$ is a normal $p$-complement for $G$, and $P \cong G/N'$. Since $Z (G/N') = N/N'$, we have $\norm {Z(P)} = \norm {N:N'} = \norm {N \cap P}$. But $Z (P) \le N$ by \cite[Proposition 3.4]{genFrobGps1}, which allows us to conclude $Z (P) = P \cap N$. By \cite[Theorem 3]{genFrobGps2}, $G$ must be a $p$-group, a contradiction. Therefore $q = p$ and $G$ is a $p$-group, as claimed. 
	
Since $N'$ is a minimal normal subgroup of $G$, it follows that $N' \le Z(G)$. By applying the Three-subgroups lemma, we deduce that $[G',N] = 1$. Since $N \le G'$, this implies $N \le Z (G')$, which in turn implies that $N$ is abelian. In particular, this yields $1 = N' = [N,G]$, from which we conclude $N \le Z(G)$. Since $(G,N)$ is a Camina pair, we also have $Z(G) \le N$, which completes the proof.
\end{proof}

Of course, we get the desired characterization of central Camina pairs as a corollary. Observe the similarity to the aforementioned characterization of Frobenius groups.

\begin{cor}\label{central cam 2}
The group $G$ is a central Camina pair if and only if $G$ has a normal subgroup $N$ such that every character $\chi \in \irr {G\mid N}$ is fully ramified over $N$. In this case, $N=Z(G)$.
\end{cor}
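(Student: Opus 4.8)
The plan is to derive both directions of the equivalence directly from Theorem~\ref{central cam} together with the standard description of central Camina pairs recalled just before its proof. For the reverse implication, I would start from the hypothesis that $G$ has a normal subgroup $N$ with every $\chi\in\irr{G\mid N}$ fully ramified over $N$, and simply apply Theorem~\ref{central cam}: this gives at once that $G$ is a $p$-group and that $N=Z(G)$, which already disposes of the ``in this case'' clause. To see that $G$ is then a central Camina pair, I would reuse the observation from the opening line of the proof of Theorem~\ref{central cam}: since every member of $\irr{G\mid N}$ vanishes on $G\setminus N$, the pair $(G,N)$ is a Camina pair; because $N=Z(G)$, this says precisely that $(G,Z(G))$ is a Camina pair, i.e.\ that $G$ is a central Camina pair.

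For the forward implication, I would assume $G$ is a central Camina pair, so that $(G,Z(G))$ is a Camina pair, and take $N=Z(G)$. The required fully-ramified condition is then exactly the known structural fact about central Camina pairs recalled in the text (attributed to \cite{central camina}): if $(G,Z(G))$ is a central Camina pair, then $G$ is a $p$-group and every character in $\irr{G\mid Z(G)}$ is fully ramified over $Z(G)$. Thus $N=Z(G)$ is a normal subgroup over which every $\chi\in\irr{G\mid N}$ is fully ramified, which is what is needed; and again $N=Z(G)$ gives the final clause.

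Since the whole substance of the corollary has been absorbed into Theorem~\ref{central cam} and the cited description of central Camina pairs, there is essentially no obstacle at this level — the only point requiring care is to keep ``Camina pair'' and ``central Camina pair'' distinct and to apply the fully-ramified hypothesis to the correct normal subgroup. If one insists on locating a ``hard part,'' it lies entirely in the proof of Theorem~\ref{central cam} itself: the reduction to the case where $G/N$ is a $p$-group via \cite{camina}, \cite{Knutson}, and \cite{hominduction1}, the argument that $N'$ is a minimal normal subgroup of $G$, and the elimination of the possibility $q\ne p$ using the results on generalized Frobenius groups (\cite{genFrobGps1}, \cite{genFrobGps2}).
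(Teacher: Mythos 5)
Your proposal is correct and matches the paper's treatment: the paper states this corollary as an immediate consequence of Theorem~\ref{central cam} (reverse direction plus the ``in this case'' clause) combined with the previously quoted fact from \cite{central camina} that a central Camina pair is a $p$-group all of whose characters in $\irr{G\mid Z(G)}$ are fully ramified over $Z(G)$ (forward direction). Your extra observation that vanishing on $G\setminus N$ yields the Camina pair condition is exactly the equivalence the paper itself invokes in the first line of its proof of Theorem~\ref{central cam}.
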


\end{document}